\newtheorem {lemma}{Lemma}
\newtheorem{prop}{Proposition}
\newtheorem {coro}{Corollary}
\newtheorem{hypo}{Hypothesis}
{\theorembodyfont{\rmfamily}\theoremstyle{plain}
\newtheorem {remark}{Remark}

}
{\theorembodyfont{\rmfamily}\theoremstyle{break}
}
\newcommand{\qed}{\phantom{xxxxxx}\hfill q.e.d.}
\newenvironment{proof}{{\noindent\em Proof:} }{\qed\\}
\renewcommand{\dim}{\operatorname{dim}}
\newcommand{\var}{\operatorname{var}}
\newcommand{\graph}{\operatorname{graph}}
\newcommand{\Var}{\operatorname{Var}}
\newcommand{\ph}{\varphi}
\newcommand{\tn}[1]{|\hspace*{-0.4mm}\|#1|\hspace*{-0.4mm}\|}
\newcommand{\BV}{BV}
\newcommand{\hpsi}{{\hat\psi}}
\newcommand{\1}[1]{1_{#1}}
\newcommand{\F}{{\mathcal F}}
\newcommand{\N}{{\mathcal N}}
\newcommand{\T}{{\mathcal T}}
\renewcommand{\L}{{\mathcal L}}
\renewcommand{\O}{{\mathcal O}}
\renewcommand{\o}{\text{\sl o}}
\newcommand{\rz}{{\mathbb R}}
\newcommand{\nz}{{\mathbb N}}
\newcommand{\zz}{{\mathbb Z}}
\newcommand{\eg}{{e.g.}\xspace}
\newcommand{\ie}{{i.e.}\xspace}
\newcommand{\etc}{{etc.}\xspace}
\renewcommand{\ae}{a.e.\xspace}
\begin{document}

\newcommand{\myfont}{\sf }
\title{Continuity properties of transport coefficients\\ in simple maps}

\author{Gerhard Keller$^{1}$, Phil J. Howard$^{2}$, Rainer Klages$^{2}$\\
$^{1}$Mathematisches Institut, Universit\"at Erlangen-N\"urnberg\\
Bismarckstr. $1\frac12$, 91054 Erlangen, Germany\\
$^2$School of Mathematical Sciences, Queen Mary, University of London,\\
Mile End Road, London E1 4NS, UK}

\maketitle

\begin{abstract}

We consider families of dynamics that can be described in terms of
Perron-Frobenius operators with exponential mixing properties. For
piecewise $C^2$ expanding interval maps we rigorously prove continuity
properties of the drift $J(\lambda)$ and of the diffusion coefficient
$D(\lambda)$ under parameter variation. Our main result is that
$D(\lambda)$ has a modulus of continuity of order
$\O(|\delta\lambda|\cdot|\log|\delta\lambda|)^{2})$, i.e. $D(\lambda)$
is Lipschitz continuous up to quadratic logarithmic corrections. For a
special class of piecewise linear maps we provide more precise
estimates at specific parameter values. Our analytical findings are
verified numerically for the latter class of maps by using exact
formulas for the transport coefficients. We numerically observe strong
local variations of all continuity properties.

\end{abstract}

\section{Introduction}
\label{sec:intro}

In simple deterministic dynamical systems physical quantities like
transport coefficients can be fractal functions of control parameters.
This finding was first reported for a one-dimensional piecewise linear
map lifted periodically onto the whole real line, for which the
diffusion coefficient was computed by using Markov partitions and
topological transition matrices \cite{KlDo-95,Klages-96,KlDo-99}.  A
generalization of this result was obtained for a map with both drift
and diffusion by deriving exact analytical solutions for the transport
coefficients \cite{GrKl-02,Crist-06}. Further maps modeling chemical
reaction-diffusion \cite{GaKl-98} and anomalous diffusion
\cite{KoKl-07} yielded also fractal transport coefficients. Recent
work aimed at physically more realistic models like (Hamiltonian)
particle billiards, for which computer simulations yielded transport
coefficients that are non-monotonic under parameter variation
\cite{KlBa-04}.  Ref.~\cite{Klages-07} contains a summary of this line
of research.

These results asked for a more detailed characterization of the
``fractality'' of transport coefficients. A first attempt in this
direction was reported by Klages and Klau{\ss} \cite{KlKl-03}, who
used standard techniques from the theory of fractal dimensions for
characterizing the drift and diffusion coefficients of the map studied
in \cite{GrKl-02}.  They numerically computed a non-integer box
counting dimension for these curves which varied with the parameter
interval, leading to the notion of a ``fractal fractal dimension''.
These results were questioned by Koza \cite{Koza-04}, who computed the
oscillation of these graphs at specific Markov partition parameter
values. His work suggested a dimensionality of one by conjecturing
that there exist non-trivial logarithmic corrections to the usual
power law behaviour in the oscillation.

This research reveals the need to study the parameter dependence of
transport coefficients in a rigorous mathematical setting, which can
be formulated as follows: Given a parametrized family of chaotic
dynamical systems $T_\lambda:I\to I$ on an interval $I$ with unique
invariant physical measures $\mu_\lambda$ together with a family of
sufficiently regular observables $\psi_\lambda:I\to\rz$ one has, under
suitable mixing assumptions on the systems $(T_\lambda,\mu_\lambda)$,
a law of large numbers and a central limit theorem for the partial sum
processes
$S_{\lambda,n}(x)=\sum_{k=0}^{n-1}\psi_\lambda(T_\lambda^kx)$, namely
\begin{displaymath}
  \lim_{n\to\infty}n^{-1}S_{\lambda,n}
  =J(\lambda):=\int_I\psi_\lambda(x)\,d\mu_\lambda(x)\text{\quad for $\mu_t$-\ae $x$}
\end{displaymath}
and
\begin{displaymath}
  \L(n^{-\frac12}S_{\lambda,n})\Rightarrow\N(0,2D(\lambda))
\end{displaymath}
where
$D(\lambda):=\lim_{n\to\infty}\frac1{2n}\int_I\left(\sum_{k=0}^{n-1}(\psi_\lambda(T_\lambda^kx)-J(\lambda))\right)^2d\mu_\lambda(x)$.
For suitable choices of the observables $\psi_\lambda$, the process
$S_{\lambda,n}$ is just the deterministic random walk generated by a lift of
the map $T_\lambda$ to the real axis, and $J(\lambda)$ and $D(\lambda)$ are
the drift and diffusion coefficient of this random walk respectively.

There are a few rigorous results in the literature describing the
dependence of $\mu_\lambda$ and of quantities like $J(\lambda)$ for
various classes of systems. Without going into the details they can be
summarized as follows: If the maps $T_\lambda$ and the observables
$\psi_\lambda$ depend smoothly on $\lambda$ and if the topological
conjugacy class of $T_\lambda$ is not changed when $\lambda$ is
varied, then $\mu_\lambda$ (and hence $J(\lambda)$) depends
differentiably on $\lambda$
\cite{BaSm-07,BuLi-07,Dolgopyat-04,JiLl-06,JiRu-05,Ruelle-97,Ruelle-05}.
If the topological class changes, quantities like $J(\lambda)$ may
behave less regular and have a modulus of continuity not better than
$|\delta\lambda\cdot\log|\delta\lambda||$, even for very simple maps
$T_\lambda$ like symmetric tent maps \cite{baladi-07}. On the other
hand, this modulus of continuity is the rule for systems whose
Perron-Frobenius operator (acting on a suitable space of ``regular''
densities) has a spectral gap \cite{keller-82,KL-99}.

The goal of this paper is to explicitly relate these mathematical
results to transport coefficients.  We do so by rigorously proving
continuity properties of $J(\lambda)$ and $D(\lambda)$ under parameter
variation for certain classes of deterministic maps.  In
Section~\ref{sec:general} we give a general estimate for families of
dynamics (deterministic or not), which can be described in terms of
Perron-Frobenius operators with exponential mixing properties. The
applicability of these general results to piecewise $C^2$ expanding
interval maps and in particular to the class of piecewise linear maps
discussed in \cite{KlDo-95,Klages-96,KlDo-99,GrKl-02,Klages-07} is
checked in Section~\ref{sec:checking}. The main result is that
$D(\lambda)$ has a modulus of continuity of order
$\O(|\delta\lambda|\cdot|\log|\delta\lambda|)^{2})$, i.e. $D(\lambda)$
is Lipschitz continuous up to quadratic logarithmic corrections.  In
Section~\ref{sec:transport} we summarize the general results for
transport coefficients in the special case of piecewise linear maps
and provide more precise estimates for special parameters.  Our
analytical findings are verified by numerical computations in
Section~\ref{sec:numerical}, for which we use exact analytical
formulas of the transport coefficients \cite{GrKl-02}.  Particularly,
we numerically analyze local variations of these properties. Our work
corrects and amends previous results reported in
\cite{KlKl-03,Koza-04}.

\section{The general setting}
\label{sec:general}

Let $I$ be a compact interval, $m$ normalized Lebesgue measure on $I$, $L^1_m$
the space of Lebesgue-integrable functions from $I$ to $\rz$, and $\BV\subset
L^1_m$ the space of $L^1_m$-equivalence classes of functions of bounded
variation.  We use the following simplified notation for the two corresponding
norms:
\begin{equation}
  \label{eq:norms}
  |f|_1:=\int|f|\,dm\;,\quad\|f\|:=\Var(f)
\end{equation}
where
\begin{equation}
  \label{eq:var-def}
  \Var(f)
  :=
  \sup\left\{\int f\ph'\,dm:\ph\in C^1(\rz,\rz), |\ph|_\infty\leq1\right\}
\end{equation}
is the variation of $f$ as a function from $\rz\to\rz$ (\ie extended by
$f\equiv0$ on $\rz\setminus I$). If $f$ is differentiable as a function from
$\rz\to\rz$ integration by parts shows easily that $\Var(f)=\int|f'|\,dm$.
$\Var$ is obviously a semi-norm, and as
$|f|_1\leq|f|_\infty\leq\frac12\Var(f)$, it is actually a norm. This and more
details on functions of bounded variation can be found in \cite[section
2.3]{KL-05}. The monograph \cite{baladi-book} is a comprehensive reference for
most of the background material needed in this section.

We consider a family $\T$ of nonsingular maps $T:I\to I$. \emph{Nonsingular}
means that the \emph{Perron-Frobenius operator} $P_T:L^1_m\to L^1_m$ is well
defined, \ie
\begin{equation}
  \label{eq:PF-def}
  \int P_Tf\cdot g\,dm=\int f\cdot g\circ T\,dm\quad(f\in L^1_m, g\in L^\infty_m)\;.
\end{equation}
By definition, $|P_T|_1=1$ for all $T\in\T$, and we assume
\begin{hypo}\label{hypo:norm-bounded}
  \hspace*{1cm}$C_1:=\sup\{\|P_T^n\|:T\in\T, n\in\nz\}<\infty$.
\end{hypo}
Our main assumption is that the maps in $\T$ are \emph{uniformly exponentially
  mixing} in the following sense:
\begin{hypo}\label{hypo:mixing} Each $T\in\T$ has a unique invariant probability density
  $h_T\in\BV$ (so $P_Th_T=h_T$), and there are constants
  $\gamma\in(0,1)$ and $C_2>0$ such that, for all $T\in\T$,
  \begin{equation}
    \label{eq:mixing}
    |P_T^nf|_1\leq C_2\gamma^n\Var(f)\quad\text{for all }f\in\BV\text{ with }
    \int f\,dm=0\text{ and for all } n\in\nz\;.
  \end{equation}
\end{hypo}
Observe the following consequences of Hypothesis~\ref{hypo:norm-bounded}
and~\ref{hypo:mixing}:
\begin{equation}
  \label{eq:convergence-h}
  |P_T^nf-h_T|_1\leq C_2\gamma^n(\Var(f)+2C_1)\quad\text{for all probability densities} f\in\BV
\end{equation}
and
\begin{equation}
  \label{eq:norm-bound-h}
  \Var(h_T)\leq2C_1\quad(T\in\T)\;.
\end{equation}
Indeed, $|P_T^nf-h_T|_1=|P_T^n(f-h_T)|_1\leq
C_2\gamma^n(\Var(f)+\Var(h_T))\to0$ as $n\to\infty$ by
Hypothesis~\ref{hypo:mixing} for each probability density $f\in\BV$,
and $\Var(P_T^n1)\leq C_1\Var(1)=2C_1$ by
Hypothesis~\ref{hypo:norm-bounded}.  Hence \eqref{eq:norm-bound-h}
follows from the definition \eqref{eq:var-def} of $\Var(.)$, and then
\eqref{eq:convergence-h} is an immediate consequence.

Since it is our goal to investigate the dependence of various dynamical
quantities as functions of $T\in\T$, we need to introduce a distance on $\T$.
At this stage the following one, which was already considered in
\cite{keller-82}, is most apropriate. It measures the distance between two maps
$T_1$ and $T_2$ from $\T$ in terms of a suitable norm of
$P_{T_1}-P_{T_2}$:
\begin{equation}
  \label{eq:three-bar}
  \tn{P_{T_1}-P_{T_2}}
  :=
  \sup\left\{|P_{T_1}f-P_{T_2}f|_1: f\in\BV, \|f\|\leq 1\right\}\;.
\end{equation}
This distance can be controlled in terms of a more ``hands-on'' distance
between the graphs of the maps:
\begin{equation}
  \label{eq:hands-on}
  \begin{split}
    d(T_1,T_2) := \inf\{& \epsilon>0: \exists I_\epsilon\subseteq I\text{ and
      }\exists\text{ a diffeomorphism }\sigma:I\to I\text{ s.th. }\\
      &m(I\setminus I_\epsilon)<\epsilon,\;
      T_{1}|_{I_\epsilon}=T_2\circ\sigma|_{I_\epsilon}, \text{ and }\\
      &\forall x\in I_\epsilon:\,
      |\sigma(x)-x|<\epsilon,|1/\sigma'(x)-1|<\epsilon \}\;.
  \end{split}
\end{equation}
Namely (see \cite[Lemma 13]{keller-82}),
\begin{equation}
  \label{eq:tn-hands-on}
  \tn{P_{T_1}-P_{T_2}}\leq 12\cdot d(T_1,T_2)\;.
\end{equation}
Now, as a warm-up exercise, we can prove the following estimate: for $k\geq0$
let
\begin{equation}
  \label{eq:mdoulus-of-cont}
  \ell_k:(0,\infty)\to(0,\infty),\quad\ell_k(u):=u\cdot(1+|\log u|)^k\;.
\end{equation}
\begin{lemma}\label{lemmma:density-dependence}
  There exist constants $K_1',K_1>0$ such that
  \begin{equation}
    \label{eq:density-dependence}
    |h_{T_1}-h_{T_2}|_1
    \leq
    K_1'\cdot\ell_1(\tn{P_{T_1}-P_{T_2}})
    \leq
    K_1\cdot\ell_1(d(T_1,T_2))\quad(T_1,T_2\in\T)
  \end{equation}
\end{lemma}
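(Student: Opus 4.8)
The plan is to reduce the second inequality to the first by means of the graph-distance bound \eqref{eq:tn-hands-on}, and to prove the first inequality by a telescoping comparison of the iterated operators, combined with the exponential mixing of Hypothesis~\ref{hypo:mixing} and an optimisation over the number of iterates.

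Write $\epsilon:=\tn{P_{T_1}-P_{T_2}}$ and abbreviate $h_i:=h_{T_i}$. Since $\int(h_1-h_2)\,dm=0$, $P_{T_1}^nh_1=h_1$ and $P_{T_2}^nh_2=h_2$, I would split
\begin{displaymath}
  h_1-h_2=\bigl(P_{T_1}^nh_1-P_{T_2}^nh_1\bigr)+P_{T_2}^n(h_1-h_2)\;.
\end{displaymath}
The second summand is controlled directly by mixing: by \eqref{eq:norm-bound-h} one has $\Var(h_1-h_2)\le4C_1$, so Hypothesis~\ref{hypo:mixing} gives $|P_{T_2}^n(h_1-h_2)|_1\le 4C_1C_2\gamma^n$. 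For the first summand I would use the standard telescoping identity
\begin{displaymath}
  P_{T_1}^n-P_{T_2}^n=\sum_{k=0}^{n-1}P_{T_1}^{\,n-1-k}(P_{T_1}-P_{T_2})P_{T_2}^{\,k}\;,
\end{displaymath}
apply it to $h_1$, and estimate term by term in $|\cdot|_1$. Each $P_{T_1}$ is an $L^1_m$-contraction, so the outer factor $P_{T_1}^{\,n-1-k}$ drops out; the middle factor is bounded using the definition \eqref{eq:three-bar} of $\tn{\cdot}$ by $\epsilon\cdot\|P_{T_2}^kh_1\|$; and $\|P_{T_2}^kh_1\|\le C_1\|h_1\|\le2C_1^2$ by Hypothesis~\ref{hypo:norm-bounded} and \eqref{eq:norm-bound-h}. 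Summing the $n$ resulting bounds yields $|P_{T_1}^nh_1-P_{T_2}^nh_1|_1\le 2C_1^2\,n\,\epsilon$, whence
\begin{displaymath}
  |h_1-h_2|_1\le 2C_1^2\,n\,\epsilon+4C_1C_2\,\gamma^n\qquad(n\in\nz)\;.
\end{displaymath}

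It then remains to optimise over $n$. Choosing $n\approx|\log\epsilon|/|\log\gamma|$, so that $\gamma^n\lesssim\epsilon$, balances the linear-in-$n$ term against the exponentially small one and produces a single logarithmic factor: with $n=\lceil\log\epsilon/\log\gamma\rceil$ (noting $\epsilon\le1$, and using the trivial bound $|h_1-h_2|_1\le2$ for $\epsilon$ close to its maximal value) one obtains $|h_1-h_2|_1\le K_1'\,\epsilon(1+|\log\epsilon|)=K_1'\,\ell_1(\epsilon)$ with $K_1'$ depending only on $C_1,C_2,\gamma$. Finally, the second inequality follows because $\ell_1$ is increasing, $\epsilon\le12\,d(T_1,T_2)$ by \eqref{eq:tn-hands-on}, and $\ell_1(12u)\le\const\cdot\ell_1(u)$ uniformly over the bounded range of admissible distances.

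The point requiring care -- and the reason the logarithmic correction appears at all -- is that $\tn{\cdot}$ only controls $P_{T_1}-P_{T_2}$ as an operator from $\BV$ into $L^1_m$, not from $\BV$ into $\BV$. Consequently mixing cannot be applied to the outer factors $P_{T_1}^{\,n-1-k}$ in the telescope (that would require a variation bound on $(P_{T_1}-P_{T_2})P_{T_2}^kh_1$, which the weak distance does not supply), so each of the $n$ terms must be estimated trivially by $L^1_m$-contraction. This trade-off -- $n$ terms of size $\epsilon$ against a mixing error $\gamma^n$ -- is exactly what forces the $\ell_1$ modulus of continuity in place of a clean Lipschitz bound.
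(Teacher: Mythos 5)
Your proof is correct and follows essentially the same route as the paper: a telescoping expansion of $P_{T_1}^n-P_{T_2}^n$ estimated via $\tn{\cdot}$ and Hypothesis~\ref{hypo:norm-bounded}, combined with the exponential mixing bound and the choice $n\approx\log\epsilon/\log\gamma$. The only (immaterial) difference is that you apply the telescope to $h_{T_1}$ and absorb the remainder via $P_{T_2}^n(h_{T_1}-h_{T_2})$, whereas the paper applies it to the constant function $1$ and compares each $P_{T_i}^N1$ to $h_{T_i}$ using \eqref{eq:convergence-h}.
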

\begin{proof}
  Let $\tilde\eta:=\tn{P_{T_1}-P_{T_2}}$, assume without loss of generality that
  $\tilde\eta<1$, and fix $N\in\nz$. For $f\in\BV$,
  \begin{equation}\label{eq:telescoping}
    \begin{split}
      |P_{T_1}^Nf-P_{T_2}^Nf|_1
      &\leq
      \sum_{k=0}^{N-1}|P_{T_1}^{N-k-1}(P_{T_1}-P_{T_2})P_{T_2}^kf|_1
      \leq
      \sum_{k=0}^{N-1}|(P_{T_1}-P_{T_2})P_{T_2}^kf|_1\\
      &\leq
      \sum_{k=0}^{N-1}\tn{P_{T_1}-P_{T_2}}\|P_{T_2}^kf\|
      \leq
      \sum_{k=0}^{N-1}\tilde\eta\,C_1\|f\|
      \leq
      C_1N\tilde\eta\|f\|
    \end{split}
  \end{equation}
  where we used Hypothesis~\ref{hypo:norm-bounded}. Hence,
  \begin{displaymath}
    \begin{split}
      |h_{T_1}-h_{T_2}|_1
      &\leq
      |P_{T_1}^N1-P_{T_2}^N1|_1+|P_{T_1}^N(1-h_{T_1})|_1
      +|P_{T_2}^N(1-h_{T_2})|_1\\
      &\leq
      2C_1N\tilde\eta+2\cdot C_2\gamma^N(2+2C_1)
    \end{split}
  \end{displaymath}
  where we used \eqref{eq:mixing} and \eqref{eq:norm-bound-h}.  With
  $N=\lceil\frac{\log\tilde\eta}{\log\gamma}\rceil$, this is
  \eqref{eq:density-dependence}.
\end{proof}
\begin{remark}
  Even if $\T$ is a family of piecewise linear maps and if $T_1$ has
  the Markov property, this estimate can generally not be improved.
  Examples for this fact within the family of symmetric mixing tent
  maps are provided in \cite{baladi-07,mazzolena-07}.
\end{remark}

Suppose now that to each $T\in\T$ there is associated an ``observable''
$\psi_T:I\to\rz$. We make the following assumptions:
\begin{hypo}\label{hypo:psi-bounded}
  \hspace*{1cm}$C_3:=\sup\{\Var(\psi{_T}):T\in\T\}<\infty$
\end{hypo}
\begin{hypo}\label{hypo:psi-dependence}
  There is $C_4>0$ such that $|\psi_{T_1}-\psi_{T_2}|_1\leq C_4
  d(T_1,T_2)$ for all $T_1,T_2\in\T$.
\end{hypo}
Denote
\begin{equation}
  \label{eq:J-def}
  J(T):=\int_I\psi_T h_T\,dm\ .
\end{equation}
Then we have immediately from \eqref{eq:norm-bound-h} and
Lemma~\ref{lemmma:density-dependence}
\begin{coro}\label{koro:J}
  There is some $K_2>0$ such that, for all $T_1,T_2\in\T$,
  \begin{equation}
    |J(T_1)-J(T_2)|
    \leq
    K_2\cdot\ell_1(d(T_1,T_2))
  \end{equation}
\end{coro}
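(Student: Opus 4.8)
The plan is to decompose the difference $J(T_1)-J(T_2)=\int(\psi_{T_1}h_{T_1}-\psi_{T_2}h_{T_2})\,dm$ by inserting the cross term $\psi_{T_1}h_{T_2}$, writing
\begin{displaymath}
  J(T_1)-J(T_2)
  =\int\psi_{T_1}(h_{T_1}-h_{T_2})\,dm
  +\int(\psi_{T_1}-\psi_{T_2})\,h_{T_2}\,dm\;.
\end{displaymath}
This separates the variation of the invariant density from the variation of the observable, and each resulting piece is already controlled by one of the hypotheses stated above.

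For the first integral I would estimate $\left|\int\psi_{T_1}(h_{T_1}-h_{T_2})\,dm\right|\leq|\psi_{T_1}|_\infty\cdot|h_{T_1}-h_{T_2}|_1$. Using the elementary bound $|f|_\infty\leq\frac12\Var(f)$ for functions of bounded variation, Hypothesis~\ref{hypo:psi-bounded} gives $|\psi_{T_1}|_\infty\leq\frac12 C_3$, while Lemma~\ref{lemmma:density-dependence} bounds $|h_{T_1}-h_{T_2}|_1$ by $K_1\,\ell_1(d(T_1,T_2))$. Hence this term is at most $\frac12 C_3 K_1\,\ell_1(d(T_1,T_2))$. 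For the second integral I would use $\left|\int(\psi_{T_1}-\psi_{T_2})\,h_{T_2}\,dm\right|\leq|\psi_{T_1}-\psi_{T_2}|_1\cdot|h_{T_2}|_\infty$, where Hypothesis~\ref{hypo:psi-dependence} supplies the linear bound $|\psi_{T_1}-\psi_{T_2}|_1\leq C_4\,d(T_1,T_2)$, and \eqref{eq:norm-bound-h} together with $|f|_\infty\leq\frac12\Var(f)$ yields $|h_{T_2}|_\infty\leq C_1$.

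The only point requiring attention is that the second term is controlled by $d(T_1,T_2)$ rather than by $\ell_1(d(T_1,T_2))$; but since $\ell_1(u)=u(1+|\log u|)\geq u$, this term is also bounded by $C_1 C_4\,\ell_1(d(T_1,T_2))$, so both pieces collapse into a single $\ell_1$-modulus. Summing and setting $K_2:=\frac12 C_3 K_1+C_1 C_4$ then gives the claim. I expect no real obstacle here: the heavy lifting—the logarithmic correction in the density estimate—was already done in Lemma~\ref{lemmma:density-dependence}, and this corollary is just a product-rule splitting combined with the $L^\infty$-by-variation bound, which is exactly why it can be asserted to follow immediately.
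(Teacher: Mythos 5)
Your decomposition of $J(T_1)-J(T_2)$ into a density-variation term and an observable-variation term, bounded via $|f|_\infty\leq\frac12\Var(f)$, Lemma~\ref{lemmma:density-dependence}, \eqref{eq:norm-bound-h}, and Hypotheses~\ref{hypo:psi-bounded}--\ref{hypo:psi-dependence}, is exactly the argument the paper has in mind when it declares the corollary immediate. The proof is correct, including the observation that $\ell_1(u)\geq u$ absorbs the linear term.
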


$J(T)$ is the ``drift'' of the partial sum process
\begin{displaymath}
  S_{T,n}:=\sum_{k=0}^{n-1}\psi_T\circ T^k
  =
  n\,J(T)+\sum_{k=0}^{n-1}\hpsi_T\circ T^k
\end{displaymath}
under the invariant measure $h_Tm$, where $\hpsi_T=\psi_T-J(T)$. Observe that
\begin{equation}
  \label{eq:hpsi-bound}
  \Var(\hpsi_T)\leq 2C_3,\quad
  |\psi_{T_1}-\psi_{T_2}|\leq2C_4\,d(T_1,T_2)\quad\text{for all }T,T_1,T_2\in\T.
\end{equation}
In view of Hypothesis~\ref{hypo:mixing} we can also define the
``diffusion coefficient''\footnote{This is the convention in the
  physics literature. In the mathematics literature one would rather
  call $2D(T)$ the diffusion coefficient.}  of this process:
\begin{equation}
  \label{eq:variance-def}
  \begin{split}
    D(T)
    &:=
    \lim_{n\to\infty}\frac1{2n}\int\left(\sum_{k=0}^{n-1}\hpsi_T\circ
    T^k\right)^2h_T\,dm\\
    &=
    \frac12\int\hpsi_T^2\,h_T\,dm
    +\sum_{n=1}^\infty\int\hpsi_T\cdot\hpsi_T\circ T^n\,h_T\,dm\\
    &=
    \frac12\int\hpsi_T^2\,h_T\,dm
    +\sum_{n=1}^\infty\int P_T^n(\hpsi_Th_T)\,\hpsi_T\,dm
  \end{split}
\end{equation}
Even more, we have the central limit theorem
\begin{equation}
  \label{eq:cml}
  \L(n^{-\frac12}(S_{T,n}-nJ(T))\Rightarrow\N(0,2D(T))\text{ as }n\to\infty\;,
\end{equation}
see \eg \cite{keller-80,HK-82,RE-83}. Among physicists
(\ref{eq:variance-def}) is known as the Taylor-Green-Kubo formula
for diffusion \cite{Klages-07}. For the dependence of $D(T)$ on $T$ we
prove:
\begin{prop}\label{lemma:D}
 There is some $K_3>0$ such that, for all $T_1,T_2\in\T$,
 \begin{equation}
   \label{eq:D-dependence}
   |D(T_1)-D(T_2)|\leq K_3\cdot\ell_2(d(T_1,T_2))
 \end{equation}
\end{prop}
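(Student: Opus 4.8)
The plan is to work from the Green--Kubo representation
\begin{equation*}
  D(T)=\sum_{n=0}^\infty c_n\,F_n(T),\qquad
  F_n(T):=\int P_T^n g_T\cdot\hpsi_T\,dm,\qquad g_T:=\hpsi_Th_T,
\end{equation*}
with $c_0=\tfrac12$ and $c_n=1$ for $n\geq1$, to split the series at a cutoff $N$ chosen only at the very end as $N\asymp|\log d(T_1,T_2)|$, to control the tail by mixing and the head difference term-by-term, and then to optimize over $N$. First I would record the uniform constants that make everything quantitative: Hypothesis~\ref{hypo:psi-bounded} together with $|f|_\infty\leq\frac12\Var(f)$ gives $|\hpsi_T|_\infty\leq C_3$ and $\Var(\hpsi_T)\leq2C_3$; \eqref{eq:norm-bound-h} gives $\Var(h_T)\leq2C_1$; and the product rule $\Var(fg)\leq|f|_\infty\Var(g)+|g|_\infty\Var(f)$ then yields $\Var(g_T)\leq4C_1C_3$, while $\int g_T\,dm=0$.

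The tail is the easy part. Since $\int g_T\,dm=0$, Hypothesis~\ref{hypo:mixing} gives $|P_T^ng_T|_1\leq C_2\gamma^n\Var(g_T)\leq4C_1C_2C_3\,\gamma^n$, hence $|F_n(T)|\leq|\hpsi_T|_\infty\,|P_T^ng_T|_1\leq\const\cdot\gamma^n$ uniformly in $T$. Summing over $n>N$ therefore bounds the tail contribution to $|D(T_1)-D(T_2)|$ by $\const\cdot\gamma^N$.

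The head difference $\sum_{n=0}^N c_n\,(F_n(T_1)-F_n(T_2))$ is where the real work lies. For each $n$ I would telescope, replacing one factor at a time:
\begin{equation*}
  \begin{split}
    F_n(T_1)-F_n(T_2)
    &=\int(P_{T_1}^n-P_{T_2}^n)g_{T_1}\,\hpsi_{T_1}\,dm\\
    &\quad+\int P_{T_2}^n(g_{T_1}-g_{T_2})\,\hpsi_{T_1}\,dm
    +\int P_{T_2}^n g_{T_2}\,(\hpsi_{T_1}-\hpsi_{T_2})\,dm.
  \end{split}
\end{equation*}
The crucial bookkeeping is to pair every observable/density \emph{difference} with an $L^1$-norm, where I have control, and never with a sup-norm. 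For the first piece, \eqref{eq:telescoping} applied with $n$ in place of $N$ gives $|(P_{T_1}^n-P_{T_2}^n)g_{T_1}|_1\leq C_1n\,\tilde\eta\,\Var(g_{T_1})$ with $\tilde\eta:=\tn{P_{T_1}-P_{T_2}}$, so this piece is $\leq\const\cdot n\,\tilde\eta$. For the second piece I use the $L^1$-contraction $|P_{T_2}^n\cdot|_1\leq|\cdot|_1$ together with $|g_{T_1}-g_{T_2}|_1\leq|\hpsi_{T_1}|_\infty|h_{T_1}-h_{T_2}|_1+|h_{T_2}|_\infty|\hpsi_{T_1}-\hpsi_{T_2}|_1$; for the third I pair the sup-norm with $P_{T_2}^ng_{T_2}$, using $|P_{T_2}^ng_{T_2}|_\infty\leq\frac12C_1\Var(g_{T_2})$, which leaves $|\hpsi_{T_1}-\hpsi_{T_2}|_1$. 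Now both $|h_{T_1}-h_{T_2}|_1$ (Lemma~\ref{lemmma:density-dependence}) and $|\hpsi_{T_1}-\hpsi_{T_2}|_1\leq|\psi_{T_1}-\psi_{T_2}|_1+|J(T_1)-J(T_2)|$ (Hypothesis~\ref{hypo:psi-dependence} and Corollary~\ref{koro:J}) are $\O(\ell_1(d))$, so the second and third pieces are each $\O(\ell_1(d))$ uniformly in $n$. Summing over $0\leq n\leq N$ and using $\tilde\eta\leq12\,d$ from \eqref{eq:tn-hands-on}, the head difference is bounded by $\const\cdot(N^2 d+N\,\ell_1(d))$.

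Collecting both parts gives $|D(T_1)-D(T_2)|\leq\const\cdot(\gamma^N+N^2 d+N\,\ell_1(d))$, and the last step balances the exponentially small tail against the polynomially growing head by taking $N=\lceil\log d/\log\gamma\rceil\asymp|\log d|$, so that $\gamma^N\asymp d$; then $N^2d\asymp d|\log d|^2$ and $N\,\ell_1(d)\asymp d|\log d|^2$, both of order $\ell_2(d)=d(1+|\log d|)^2$, which is \eqref{eq:D-dependence}. I expect the main obstacle to be precisely this interplay: the operator-difference piece unavoidably loses a factor $n$ in \eqref{eq:telescoping}, so summation up to $N$ produces an $N^2$, and the logarithmic choice of $N$ forced by killing the tail then converts this into the \emph{quadratic} logarithmic correction — exactly one power of $\log$ beyond the modulus $\ell_1$ obtained for $h_T$ and $J$.
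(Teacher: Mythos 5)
Your proposal is correct and follows essentially the same route as the paper: split the Green--Kubo series at $N\asymp|\log d(T_1,T_2)|$, kill the tail by the uniform mixing bound, telescope the head difference so that every $\psi$- or $h$-difference is paired with an $L^1$-norm, and absorb the $n\tilde\eta$ loss from the operator-difference term into $N^2\eta=\O(\ell_2(\eta))$. The only (cosmetic) differences are that you group the paper's $\Delta_3^n$ and $\Delta_4^n$ into a single $g_{T_1}-g_{T_2}$ term before splitting it again, and that you correctly account for the $\O(\ell_1(d))$ contribution of $J(T_1)-J(T_2)$ to $|\hpsi_{T_1}-\hpsi_{T_2}|_1$, which the paper's stated bounds for $\Delta_1^n$ and $\Delta_3^n$ gloss over (harmlessly, since it is absorbed in the final $\ell_2$ estimate).
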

\begin{proof}
  Observe first that, for all $\psi,h\in\BV$,
  \begin{equation}
    \label{eq:D-proof-0}
    \Var(\psi h)
    \leq
    \Var(\psi)|h|_\infty+\Var(h)|\psi|_\infty
    \leq
    \Var(\psi)\Var(h)
    \;.
  \end{equation}
  Indeed, for differentiable $u$ we have $\Var(u)=\int|u'|\,dm$, so for
  differentiable $h$ and $\psi$ \eqref{eq:D-proof-0} follows from
  the product rule of diferentiation. General $h$ and $\psi$ are
  then approximated using mollifiers.
  It follows that, in view of \eqref{eq:mixing},
  \begin{equation}
    \label{eq:D-proof-1}
    \begin{split}
      \left|\int P_T^n(\hpsi_Th_T)\,\hpsi_T\,dm\right|
      &\leq
      |P_T^n(\hpsi_Th_T)|_1\cdot|\hpsi_T|_\infty
      \leq
      C_2C_3\gamma^n\Var(\hpsi_T h_T)\\
      &\leq
      4C_1C_2C_3^2\gamma^n\;.
    \end{split}
  \end{equation}
  Let $\tilde\eta=\|P_{T_1}-P_{T_2}\|$ as before, denote $\eta:=d(T_1,T_2)$
  (so that $\tilde\eta\leq12\eta$), and fix
  $N=\lceil\frac{\log\eta}{\log\gamma}\rceil$.  For all $T_1,T_2\in\T$, eq.
  \eqref{eq:D-proof-1} implies
  \begin{equation}
    \label{eq:D-proof-2}
    2\sum_{n=N}^\infty\left|\int P_{T_1}^n(\hpsi_{T_1}h_{T_1})\,\hpsi_{T_1}\,dm-
      \int P_{T_2}^n(\hpsi_{T_2}h_{T_2})\,\hpsi_{T_2}\,dm\right|
    \leq
    \frac{16C_1C_2C_3^2}{1-\gamma}\eta\;.
  \end{equation}

  For $0\leq n<N$ we use a different estimate. We decompose
  \begin{equation}
    \label{eq:D-proof-3}
    \int P_{T_1}^n(\hpsi_{T_1}h_{T_1})\,\hpsi_{T_1}\,dm-
    \int P_{T_2}^n(\hpsi_{T_2}h_{T_2})\,\hpsi_{T_2}\,dm
    =\Delta_1^n+\Delta_2^n+\Delta_3^n+\Delta_4^n
  \end{equation}
  where
  \begin{equation}
    \label{eq:D-proof-4}
    \begin{split}
      |\Delta_1^n|
      :=&
      \left|\int P_{T_1}^n(\hpsi_{T_1}h_{T_1})\,(\hpsi_{T_1}-\hpsi_{T_2})\,dm\right|
      \leq
      \frac12\Var(P_{T_1}^n(\hpsi_{T_1}h_{T_1}))\,|\hpsi_{T_1}-\hpsi_{T_2}|_1\\
      \leq&
      4C_1^2C_3C_4\eta
    \end{split}
  \end{equation}
and
  \begin{equation}
    \label{eq:D-proof-5}
    \begin{split}
      |\Delta_2^n|
      :=&
      \left|\int(P_{T_1}^n-P_{T_2}^n)(\hpsi_{T_1}h_{T_1})\,\hpsi_{T_2}\,dm\right|
      \leq
      \tn{P_{T_1}^n-P_{T_2}^n}\Var(\hpsi_{T_1}h_{T_1})|\hpsi_{T_2}|_\infty\\
      \leq&
      4C_1C_3^2\tn{P_{T_1}^n-P_{T_2}^n}
      \leq
      4(C_1C_3)^2n\tilde\eta
    \end{split}
  \end{equation}
  where the last inequality follows from eq.~\eqref{eq:telescoping}.
 Next,
\begin{equation}
  \label{eq:D-proof-7a}
  \begin{split}
    |\Delta_3^n| :=&
    \left|\int P_{T_2}^n\big((\hpsi_{T_1}-\hpsi_{T_2})h_{T_1}\big)\hpsi_{T_2}\,dm\right|
    \leq
    |\hpsi_{T_1}-\hpsi_{T_2}|_1\,|h_{T_1}|_\infty\,|\hpsi_{T_2}|_\infty\\
    \leq&
    4C_1C_3C_4\eta
  \end{split}
\end{equation}
and
\begin{equation}
  \label{eq:D-proof-7b}
  \begin{split}
    |\Delta_4^n| :=&
    \left|\int P_{T_2}^n\big(\hpsi_{T_2}(h_{T_1}-h_{T_2})\big)\hpsi_{T_2}\,dm\right|
    \leq
    |h_{T_1}-h_{T_2}|_1\,|\hpsi_{T_2}|_\infty\,|\hpsi_{T_2}|_\infty\\
    \leq&
    C_3^2K_1\ell_1(\tilde\eta)
  \end{split}
\end{equation}

>From \eqref{eq:D-proof-4} - \eqref{eq:D-proof-7b} we see that
\begin{equation}
  \label{eq:D-proof-8}
  |\Delta_1^n|+|\Delta_2^n|+|\Delta_3^n|+|\Delta_4^n|
  \leq
  \tilde K(n\eta+\ell_1(\eta))
\end{equation}
for some constant $\tilde K>0$. Hence, in view of \eqref{eq:D-proof-2} and the
choice of $N$,
\begin{equation}
  \label{eq:D-proof-9}
  \begin{split}
    |D(T_1)-D(T_2)|
    &\leq
    \frac{4C_1C_2C_3^2}{1-\gamma}\eta+
    \tilde K\big(\ell_1(\eta)+2\sum_{n=1}^{N-1}(n\eta+\ell_1(\eta))\big)\\
    &\leq
    \frac{4C_1C_2C_3^2}{1-\gamma}\eta+
    \tilde K\big(N^2\eta+(2N-1)\ell_1(\eta)\big)\\
    &\leq
    K_3\cdot\ell_2(\eta)
  \end{split}
\end{equation}
for a suitable constant $K_3$.
\end{proof}

\begin{remark}
  Quite often slightly stronger forms of Hypotheses~\ref{hypo:mixing}
  and~\ref{hypo:psi-dependence} are satisfied, where the mixing
  assumption \eqref{eq:mixing} is replaced by
  \begin{equation}
    \label{eq:BV-mixing}
    \Var(P_T^nf)\leq C_2'\gamma^n\Var(f)\quad\text{for all }f\in\BV\text{ with }
    \int f\,dm=0\text{ and } n\in\nz
  \end{equation}
and the assumption on the $T$-dependence of $\psi_T$ is strengthened to
\begin{equation}
  \label{eq:strong-psi-dependence}
  \Var(\psi_{T_1}-\psi_{T_2})\leq C_4'd(T_1,T_2)
  \quad\text{for all $T_1,T_2\in\T$.}
\end{equation}
An inspection of the above estimates shows that
$|\Delta_1^n|\leq4C_1C_2'C_3C_4\gamma^n\eta$ and $|\Delta_2^n|\leq
  4C_1C_2'C_3(1-\gamma)^{-1}\eta$ if \eqref{eq:BV-mixing} is assumed. If
  additionally \eqref{eq:strong-psi-dependence} is assumed, then $|\Delta^n_3|$
  can be estimated as follows: Let
  $\alpha:=\int_I(\hpsi_{T_1}-\hpsi_{T_2})h_{T_1}dm$. Then
\begin{equation}
  \begin{split}
    |\Delta_3^n|
    &=
    \left|\int P_{T_2}^n\big((\hpsi_{T_1}-\hpsi_{T_2})h_{T_1}-\alpha h_{T_2}\big)\hpsi_{T_2}\,dm\right|
    \leq
    C_2C_3\,\gamma^n\,\Var\left((\hpsi_{T_1}-\hpsi_{T_2})h_{T_1}-\alpha
    h_{T_2}\right)\\
  &\leq
    C_1C_2C_3C_4'(4+2)\gamma^n\eta\ .
  \end{split}
\end{equation}
Hence, $\sum_{n=0}^{N-1}|\Delta^n_1|+|\Delta^n_3|=\O(\eta)$ uniformly in
$N$ and $\sum_{n=0}^{N-1}|\Delta^n_2|=\O(N\eta)=\O(\ell_1(\eta))$. But we see no
way, in general, to bound the $\Delta_4^n$-terms in a similar way. However,
for particular families of maps (which are all topologically conjugate), we will
see in subsection~\ref{subsec:integer-slope} that $\Delta^n_4=0$ for all $n$ and that the estimate for
$|\Delta^n_2|$ can be made more precise.
\end{remark}

\section{Checking Hypothesis~\ref{hypo:norm-bounded} and~\ref{hypo:mixing}}
\label{sec:checking}

\subsection{General piecewise expanding maps}

In this subsection we show how the general Hypothesis~\ref{hypo:norm-bounded}
and~\ref{hypo:mixing} can be verified in the more particular setting when $\T$
is a parametrized family of piecewise twice continuously differentiable and
expanding interval maps. So, from now on, we look at the following setting:
\begin{gather}
  \text{$\Lambda\subset\rz^d$ is a compact parameter space,
    $\T=\{T_\lambda:\lambda\in\Lambda\}$, and}\tag{T1}
  \label{T1}\\
  \text{there is some $L>0$ such that $d(T_{\lambda_1},T_{\lambda_2})\leq
    L|\lambda_1-\lambda_2|$ for all
    $\lambda_1,\lambda_2\in\Lambda$.}\tag{T2}
  \label{T2}
\end{gather}

We start with an abstract result which reduces Hypothesis~\ref{hypo:mixing}
essentially to a uniform Lasota-Yorke type inequality.
\begin{lemma}\label{lemma:LY}
  Assume \eqref{T1} and \eqref{T2}. Then Hypothesis~\ref{hypo:norm-bounded}
  and~\ref{hypo:mixing} are valid if the transformations $T\in\T$ are mixing
  and satisfy a \emph{uniform Lasota-Yorke type inequality}: there are
  constants $C_5,C_6>0$ and $\alpha\in(0,1)$ such that
\begin{equation}
  \Var(P_T^nf)
  \leq
  C_5\alpha^n\Var(f)+C_6|f|_1\quad\text{for all }T\in\T, n\in\nz, f\in\BV.
  \tag{LY}
  \label{eq:LY}
\end{equation}
\end{lemma}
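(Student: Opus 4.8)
The plan is to obtain Hypothesis~\ref{hypo:norm-bounded} directly from \eqref{eq:LY}, and to deduce Hypothesis~\ref{hypo:mixing} from a \emph{uniform spectral gap} for the operators $P_T$, which itself follows by combining \eqref{eq:LY}, the mixing assumption, and the Keller--Liverani stability theory \cite{KL-99}. For Hypothesis~\ref{hypo:norm-bounded} there is nothing to do beyond inserting $|f|_1\leq\frac12\Var(f)$ into \eqref{eq:LY}: since $\alpha<1$,
\[
  \Var(P_T^nf)\leq C_5\alpha^n\Var(f)+C_6|f|_1\leq\bigl(C_5+\tfrac12 C_6\bigr)\Var(f),
\]
so $C_1:=C_5+\frac12 C_6$ bounds $\|P_T^n\|$ uniformly over $T\in\T$ and $n\in\nz$.

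For a \emph{fixed} $T$ I would first record the standard consequences of the Lasota--Yorke inequality \eqref{eq:LY}. Because the unit ball of $\BV$ is relatively compact in $L^1_m$ (Helly's selection theorem) and $P_T$ is an $L^1_m$-contraction, the Ionescu-Tulcea--Marinescu/Hennion theorem (see \cite{baladi-book}) applies and shows that $P_T$ acts quasi-compactly on $\BV$ with essential spectral radius at most $\alpha$; outside $\{|z|\leq\alpha\}$ the spectrum consists of finitely many eigenvalues of finite multiplicity. Since $P_T$ preserves integrals and positivity and $T$ is mixing, the peripheral spectrum reduces to the simple eigenvalue $1$ with eigenfunction the unique invariant density $h_T\in\BV$. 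This gives the spectral decomposition $P_T=\Pi_T+N_T$, where $\Pi_Tf=(\int f\,dm)\,h_T$ is the rank-one projection onto $\langle h_T\rangle$, $\Pi_TN_T=N_T\Pi_T=0$, and the spectral radius of $N_T$ on $\BV$ is some $\rho_T<1$, so that $\Var(N_T^nf)\leq C_T\rho_T^n\Var(f)$.

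The single-map version of \eqref{eq:mixing} is then immediate: if $\int f\,dm=0$ then $\Pi_Tf=0$, hence $P_T^nf=N_T^nf$, and using $|g|_1\leq\frac12\Var(g)$,
\[
  |P_T^nf|_1\leq\tfrac12\Var(N_T^nf)\leq\tfrac12 C_T\rho_T^n\Var(f).
\]
The real work — and the step I expect to be the main obstacle — is to make the constants $C_T,\rho_T$ \emph{uniform} over $\T$. Ordinary perturbation theory is unavailable here, because distinct maps have distinct turning/discontinuity points and $P_{T_1}-P_{T_2}$ is \emph{not} small in the $\BV$-operator norm. What rescues the argument is that, by \eqref{T2} together with \eqref{eq:tn-hands-on}, the assignment $\lambda\mapsto P_{T_\lambda}$ is Lipschitz in the weak operator norm $\tn{\cdot}$, while the Lasota--Yorke constants $C_5,C_6,\alpha$ are uniform.

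This is exactly the situation covered by the Keller--Liverani stability theorem \cite{KL-99}: under a uniform Lasota--Yorke bound and continuity in $\tn{\cdot}$, the leading eigenprojection and the spectral gap vary continuously, so that near any $T_0\in\T$ the decomposition above persists with locally uniform constants $C_T,\rho_T$. To conclude I would invoke compactness of $\Lambda$ and continuity of $\lambda\mapsto T_\lambda$ in the $d$-metric to cover $\Lambda$ by finitely many such neighbourhoods, and then set $\gamma:=\max_i\rho_i<1$ and $C_2:=\frac12\max_i C_i$. This yields \eqref{eq:mixing} with constants independent of $T$, and hence Hypothesis~\ref{hypo:mixing}.
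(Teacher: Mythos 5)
Your proof is correct and follows essentially the same route as the paper: Hypothesis~\ref{hypo:norm-bounded} is read off from \eqref{eq:LY} with $C_1=C_5+\frac12C_6$, and Hypothesis~\ref{hypo:mixing} is obtained by combining the uniform Lasota--Yorke inequality with the Keller--Liverani stability theory \cite{KL-99} to get locally uniform constants near each parameter, then globalizing via compactness of $\Lambda$ and \eqref{T2}. The only difference is that you spell out the single-map quasi-compactness and spectral-gap argument that the paper leaves implicit in its citation of Corollary 2 and Remark 1c of \cite{KL-99}.
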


\begin{proof}
  As $|P_T|_1=1$ and $|f|_1\leq\frac12\|f\|$, it is straightforward to check
  that Hypothesis~\ref{hypo:norm-bounded} holds with $C_1=C_5+\frac12C_6$.

  We turn to Hypothesis~\ref{hypo:mixing}. Note first that, because of
  \eqref{T1} and \eqref{T2}, it suffices to show that for each
  $\lambda\in\Lambda$ there are $\delta(\lambda)>0$, $C_2(\lambda)>0$ and
  $\gamma(\lambda)\in(0,1)$ such that \eqref{eq:mixing} holds with these
  constants for all $T_{\lambda_1}$ with
  $|\lambda_1-\lambda|<\delta(\lambda)$. But this is guaranteed by Corollary 2
  and Remark 1c  in \cite{KL-99}.
\end{proof}

Our next task is to give sufficient conditions for \eqref{eq:LY}. To this end we
specialize further and assume from now on that our maps are piecewise
expanding (PE) maps in the following sense:
\begin{equation}
  \begin{split}
    &\text{For each }\lambda\in\Lambda\text{ there is a finite partition }
  (I^1_\lambda,\dots,I^{N_\lambda}_\lambda)\text{ of }I\text{ into
  sub-}\\ &\text{intervals such
  that all }T_\lambda|_{I^j_\lambda}\text{ are monotone, $C^2$, and }
  \kappa_\lambda:=\inf|T_\lambda'|>2\,.
  \end{split}
  \tag{PE}
  \label{eq:PE}
\end{equation}
Already in \cite{LY-73} it was proved that each individual (PE)-map (even if
$1<\kappa_\lambda\leq2$) satisfies \eqref{eq:LY} with constants $C_5,C_6,\alpha$
depending on the map. For parametrized families of maps one can generally find
uniform constants, but there are counterexamples where this is not
possible \cite{keller-82,blank-93,BlKe-97}. Under the
above assumption $\inf_{\lambda\in\Lambda}\kappa_\lambda>2$ one can, however,
give simple sufficient conditions ensuring the uniform LY-inequality.  The
proof in \cite{LY-73} (see also \cite[Proposition 2.1]{KL-05}) shows
 \begin{equation}
   \label{eq:LY-detail}
   \Var(P_{T_\lambda}f)
   \leq
   \frac 2{\kappa_\lambda}\Var(f)+(D_\lambda+E_\lambda)|f|_1
 \end{equation}
 where
  \begin{equation}
    D_\lambda=\sup_{x}\left|\left(\frac1{T_\lambda'(x)}\right)'\right|\;,\quad
    E_\lambda=\frac2{\kappa_\lambda\min_j|I_\lambda^j|}\;.
  \end{equation}
  From this \eqref{eq:LY} follows with $\alpha=\frac2{\kappa_\lambda}$,
  $C_5=1$, and
  $C_6=\sup_\lambda\frac{\kappa_\lambda}{\kappa_\lambda-2}(D_\lambda+E_\lambda)$
  provided this supremum is finite.

 In some cases of interest the $E_\lambda$,
 $\lambda\in\Lambda$, are not bounded because there are arbitrarily short
 monotonicity intervals. In such situations, \emph{ad hoc}
 arguments are needed. We give an example in the next section.

\subsection{Piecewise linear modulo 1 maps}
\label{subsec:Tab}
We now look at a particular model dealt with in
\cite{FlLa-97a,FlLa-97b,hofbauer-80,hofbauer-81} from a mathematical
perspective and in \cite{KlDo-95,Klages-96,KlDo-99,GrKl-02,Klages-07}
from a physics point of view. Let $I=[-\frac12,\frac12]$,
$\Lambda=[a_0,a_1]\times[-\frac12,\frac12]$ for some constants $2<a_0<a_1$,
and for $\lambda=(a,b)\in\Lambda$ consider
\begin{equation}
  \label{eq:pwl}
  T_\lambda(x)=ax+b\mod (\zz-\frac12)\;.
\end{equation}
Hofbauer \cite{hofbauer-80} showed that these maps have always a
unique invariant probability density\footnote{Indeed, Hofbauer shows
this for the maximal measure of such maps, but since these maps have
constant slope, the maximal measures are just the absolutely
continuous ones. For numerical results on the probability densities
associated with these measures and how they change under parameter
variation see \cite{Klages-96}.}, but although
these maps received further attention also in the mathematical literature
\cite{hofbauer-81,FlLa-97a,FlLa-97b}, it is not so easy to draw
Hypothesis~\ref{hypo:mixing} from these sources.  Therefore we will take up a
rather direct computation made in \cite{keller-99} to prove, without having to
rely on the compactness assumption \eqref{T1}, the following lemma.
\begin{lemma}\label{lemma:var-contraction}
  Let $\lambda=(a,b)\in\Lambda$. Then
  \begin{equation}
    \label{eq:var-contraction}
    \Var(P_{T_\lambda}f)
    \leq
    \frac2a\Var(f)+2\left|\int f\,dm\right|\qquad\text{for all $f\in\BV$ and $n\in\nz$.}
  \end{equation}
(This implies immediately Hypothesis~\ref{hypo:norm-bounded} with
$C_1=1+\frac{a_0}{a_0-2}$ and Hypothesis~\ref{hypo:mixing} as well as its
strengthening \eqref{eq:BV-mixing} with
$\gamma=\frac2{a_0}$ and $C_2=C_2'=1$.)
\end{lemma}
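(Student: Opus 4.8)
The plan is to compute $\Var(P_{T_\lambda}f)$ directly, exploiting that $T_\lambda$ has \emph{constant} slope $a$ on each of its finitely many monotonicity intervals $I^j_\lambda=[c_{j-1},c_j]$. This is exactly where the generic Lasota--Yorke bound \eqref{eq:LY-detail} simplifies: the distortion term $D_\lambda=\sup|(1/T_\lambda')'|$ vanishes identically, so the transfer operator acts by
\[
  P_{T_\lambda}f(y)=\frac1a\sum_{x\in T_\lambda^{-1}(y)}f(x),
\]
and, since each inverse branch is affine with slope $1/a$, a change of variables gives the ``interior'' bound $\Var_{(-\frac12,\frac12)}(P_{T_\lambda}f)\le\frac1a\sum_j\Var_{I^j_\lambda}(f)\le\frac1a\Var(f)$. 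The only remaining contributions to the full $\rz$-variation (with the extension-by-zero convention) are the two jumps to zero of $P_{T_\lambda}f$ at the endpoints $\pm\frac12$ of $I$ -- and these are precisely what force the bad term $E_\lambda|f|_1$ in the naive estimate when the partition has short pieces.

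The decisive structural input is the \emph{mod}-$1$ form \eqref{eq:pwl}: as $x$ increases through an interior breakpoint $c_j$ the value of $T_\lambda$ wraps from $\frac12$ down to $-\frac12$, so every \emph{full} branch has image exactly $I$ and length exactly $|I^j_\lambda|=1/a$. Hence all full branches are present throughout $(-\frac12,\frac12)$ and fall to zero \emph{simultaneously} at $\pm\frac12$; the two endpoint values are therefore the \emph{single} sums $P_{T_\lambda}f(\frac12^-)=\frac1a\sum_j f(c_j^-)$ and $P_{T_\lambda}f(-\frac12^+)=\frac1a\sum_j f(c_{j-1}^+)$, with the summation \emph{inside} the modulus. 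This is the essential gain over the triangle inequality, which would instead produce $\frac1a\sum_j|f(c_j^\pm)|$ and hence an $E_\lambda|f|_1$-type term. I would then compare each sum to the mean: because $|I^j_\lambda|=1/a$ exactly, $|\frac1a f(c_j^-)-\int_{I^j_\lambda}f\,dm|\le\frac1a\Var_{I^j_\lambda}(f)$, so $\frac1a\sum_j f(c_j^-)$ coincides with $\int f\,dm$ up to a variation error, and likewise for the lower endpoints. Each endpoint jump is thus at most $|\int f\,dm|+\O(\frac1a\Var(f))$, and the two together yield the coefficient $2$ in front of $|\int f\,dm|$.

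The main obstacle is the point I glossed over: the two \emph{outermost} branches are only partial. Their monotonicity intervals can be arbitrarily short, their images are proper subintervals $[\beta_0,\frac12)$ and $[-\frac12,\beta_1)$ of $I$ rather than all of it, and they are exactly the intervals responsible for the blow-up of $E_\lambda$, so the Riemann-sum comparison does not apply to them verbatim. I would absorb them by hand: their only effect is to add the single terms $\frac1a f(\pm\frac12)$ to the endpoint sums and to leave the short pieces $\int_{[\beta_0,\frac12)}f\,dm$, $\int_{[-\frac12,\beta_1)}f\,dm$ out of the Riemann sums, each of which is $\O(\frac1a\Var(f))$ since a short interval has length $<1/a$ and $|f|_\infty\le\frac12\Var(f)$. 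The genuinely delicate bookkeeping -- carried out in the direct computation of \cite{keller-99} -- is to combine all these variation errors with the interior term so that they add up to no more than $\frac1a\Var(f)$ on top of it, giving the \emph{sharp} coefficient $\frac2a$ rather than a larger multiple of $\frac1a$. Finally, the parenthetical consequences follow by iteration: since $\int P_{T_\lambda}f\,dm=\int f\,dm$, applying \eqref{eq:var-contraction} $n$ times gives $\Var(P_{T_\lambda}^nf)\le(\frac2a)^n\Var(f)+2|\int f\,dm|\sum_{k=0}^{n-1}(\frac2a)^k$; for $\int f\,dm=0$ this is \eqref{eq:BV-mixing} with $\gamma=2/a_0$ and $C_2'=1$, and after bounding $|\int f\,dm|\le\frac12\Var(f)$ and $(\frac2a)^n\le1$ it yields Hypothesis~\ref{hypo:norm-bounded} with $C_1=1+\frac{a_0}{a_0-2}$.
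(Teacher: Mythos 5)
Your structural picture is the right one---constant slope kills the distortion term, full branches have image exactly $I$, and the two boundary jumps of $P_{T_\lambda}f$ are single sums that should be compared against $\int f\,dm$ rather than estimated by the triangle inequality. The iteration argument for the parenthetical consequences is also fine. But the decisive step, the sharp coefficient $\frac2a$, is asserted rather than obtained, and this is not a cosmetic issue: the lemma must hold for all $a>a_0>2$, so any coefficient of the form $\frac{C}{a}$ with $C>2$ destroys the contraction exactly in the parameter range the paper cares about. Follow your own accounting: the interior variation costs $\frac1a\Var(f)$; the comparison $|\frac1a f(c_j^{-})-\int_{I^j_\lambda}f\,dm|\le\frac1a\Var_{I^j_\lambda}(f)$ incurs, \emph{per endpoint of $I$}, a total Riemann-sum error of $\frac1a\sum_j\Var_{I^j_\lambda}(f)\le\frac1a\Var(f)$; doing this at both $\pm\frac12$ already brings you to $\frac3a\Var(f)$ before you have touched the two partial branches and the leftover integrals over them, each worth up to another $\frac1a|f|_\infty\le\frac1{2a}\Var(f)$. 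Your claim that all of this can be ``combined'' into a single extra $\frac1a\Var(f)$ is exactly the content of the lemma, and you offer no mechanism for it. (There is in fact a partial rescue you did not identify: on a branch $(A_j,B_j)$ whose right endpoint feeds the sum at $\frac12^-$ and whose left endpoint feeds the sum at $-\frac12^+$, the two errors pair up as $\int_{A_j}^{B_j}\bigl(|f(B_j^-)-f(x)|+|f(A_j^+)-f(x)|\bigr)dx\le\frac1a\Var_{(A_j,B_j)}(f)$, one unit of variation instead of two; but even granting this, the two partial branches still have to be handled, and none of this is in your write-up.)

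Deferring ``the genuinely delicate bookkeeping'' to \cite{keller-99} does not close the gap, and it also mischaracterizes that reference: the computation there (which is precisely what the paper's proof carries out) is not a refined primal bookkeeping of the kind you describe. It is a dual argument: one tests $P_{T_\lambda}f$ against $C^1$ functions $\ph$ with $|\ph|_\infty\le1$, subtracts an affine shift $s$, and bounds a quantity $\Gamma(\ph,s)$ so that $\Var(P_{T_\lambda}f)\le\sup_\ph\Gamma(\ph,s_\ph)\Var(f)+\sup_\ph|s_\ph|\,|\int f\,dm|$. The paper's proof chooses $s_\ph=\ph(\frac12)-\ph(-\frac12)$ (so $|s_\ph|\le2$, which is where the coefficient $2$ of $|\int f\,dm|$ really comes from---not from ``two endpoints''), computes the branch-endpoint quantities $U_k,V_k$ explicitly in \eqref{eq:evaluation1}--\eqref{eq:evaluation2}, and observes that \emph{all} of them agree up to $a^{-1}|s_\ph|$. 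The sharp constant $\frac2a$ thus emerges from a single uniform bound $|V_k-U_{k'}|\le a^{-1}|s_\ph|$ over all pairs of branches, not from summing per-branch errors---which is exactly the step your primal sketch cannot reproduce as written.
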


\begin{proof}
  Denote by $\F$ the family of all $C^1$-functions $\ph:\rz\to\rz$ with
  $|\ph|_\infty\leq1$. In \cite[eq. (11)]{keller-99} a number
  $\Gamma(\ph,s)\geq0$ is defined for each pair of $\ph\in\F$ and $s\in\rz$.
  In view of \cite[eq. (13) and (14)]{keller-99} it suffices to show that for
  each $\ph\in\F$ there is some $s\in[-2,2]$ such that
  $\Gamma(\ph,s)\leq\frac 2a$.\footnote{$\var(.)$ in \cite{keller-99} is the same as
    $\Var(.)$ here. This is different from the use of $\var(.)$ in
    \cite{KL-05}.}  \footnote{Following this reference precisely, the reader
    will notice that instead of the factor $\frac2a$ one gets the factor
    $\frac2a+\frac12V_g$. But $V_g=0$ for piecewise linear transformations as
    noticed at the bottom of \cite[p.1779]{keller-99}.}  We will show that this
  is the case for $s=s_\ph:=\ph(\frac12)-\ph(-\frac12)$.

  Let $p=\lceil\frac{a+1}2-b\rceil-1$ and $q=\lceil\frac{a+1}2+b\rceil-1$.
  Then $-p-\frac12\leq T_\lambda(-\frac12)<-p+\frac12$,
  $q-\frac12<T_\lambda(\frac12)\leq q+\frac12$, and $T_\lambda$ has
  monotonicity intervals $(A_k,B_k)$, $k=-p,\dots,q$, where
  \begin{align*}
    &A_{-p}=-\frac12,\quad&A_k=a^{-1}(k-\frac12-b)\quad(k=-p+1,\dots,q)\\
    &B_q=\frac12,&B_k=a^{-1}(k+\frac12-b)\quad(k=-p,\dots,q-1)
  \end{align*}
  In order to estimate $\Gamma(\ph,s)$ in \cite[eq. (11)]{keller-99} one has
  to evaluate certain terms $U_k:=\psi(A_k)-\ell_kg_k(A_k)-sA_k$ and
  $V_k:=\psi(A_k)-\ell_kg_k(A_k)-sB_k$. In our case, $g_k(A_k)=a^{-1}$, as $g_k$ is
  the inverse of the derivative of the $k$-th monotone branch. The quantity
  $\ell_k$ is an abbreviation for $\ph(T_\lambda A_k)$,  where $T_\lambda
    A_k$ denotes a limit from the right. Therefore, using the formula on the bottom of
  \cite[p.1779]{keller-99}, we obtain
  \begin{equation}
    \label{eq:evaluation1}
    \begin{split}
    a\,U_k=&a\cdot(\psi(A_k)-\ell_kg_k(A_k)-s_\ph A_k)\\
    =&
    -\ph(-\frac a2+b+p)+(k+p)\big(\ph(\frac12)-\ph(-\frac12)\big)
    -s_\ph\cdot(k-\frac12-b)\\
    =&
    -\ph(-\frac a2+b+p)+s_\ph\cdot(p+\frac12+b)\quad\text{ if }k>-p\ ,\\
    a\,U_{-p}=&
    -\ph(-\frac a2+b+p)+s_\ph\cdot\frac a2\ ,
    \end{split}
  \end{equation}
  and similarly,
  \begin{equation}
    \label{eq:evaluation2}
    \begin{split}
      a\,V_k
      =&
      a\cdot(\psi(A_k)-\ell_kg_k(A_k)-s_\ph B_k)\\
      &=
      -\ph(-\frac a2+b+p)+s_\ph\cdot(p-\frac12+b)\quad\text{ if }k<q\ ,\\
      a\,V_q
      =&
      -\ph(-\frac a2+b+p)+s_\ph\cdot(p+q-\frac a2)\ .
    \end{split}
  \end{equation}
  It follows that $|V_k-U_{k'}|\leq a^{-1}|s_\ph|$ for all
  $k,k'\in\{-p,\dots,q\}$. Hence, by \cite[eqs. (11) and (12)]{keller-99},
  \begin{equation}
    \label{eq:evaluation3}
    \Gamma(\ph,s_\ph)
    \leq
    a^{-1}+\frac12 a^{-1}|s_\ph|
    \leq
    \frac2a\ .
  \end{equation}
\end{proof}

Next we check assumption \eqref{T2} on the Lipschitz dependence of the maps on
the parameters, so we estimate $d(T_{a,b},T_{a',b'})$. For the proof we extend
the maps to the whole real line (keeping the same names) by applying
definition \eqref{eq:pwl} to all $x\in\rz$.

Suppose $a'<a$ and denote by $A_k$ and $A_k'$, $k=-p+1,\dots,q$, the
discontinuity points of the two maps as introduced in the proof of
Lemma~\ref{lemma:var-contraction}. Consider the linear map $L:\rz\to\rz$,
$L(x)=(ax+b-b')/a'$ and observe that $L(A_k)=A_k'$ $(-p<k\leq q)$ and
$a'L(x)+b'=ax+b$ for all $x\in\rz$. Let $[u,v]:=I\cap L^{-1}(I)$ and
$I_0:=[u+\delta,v-\delta]$ for some arbitrarily small $\delta>0$. Define
$\sigma:I\to I$ by $\sigma(x)=L(x)$ if $x\in I_0$ and extend $\sigma$ to a
diffeomorphism of $I$. Then
\begin{itemize}
\item $m(I\setminus I_0)\leq(1-\frac{a'}a)+|b'-b|/a+2\delta
  \leq a_0^{-1}(|a'-a|+|b'-b|)+2\delta$
\item $|\sigma(x)-x|=|L(x)-x|\leq\frac12|\frac a{a'}-1|+|b-b'|/a'\leq a_0^{-1}(\frac12|a-a'|+|b-b'|)$ for all $x\in I_0$
\item $|1/\sigma'(x)-1|=|\frac{a'}a-1|\leq a_0^{-1}|a'-a|$ for all $x\in I_0$
\item $T_{a'}(\sigma(x))=T_a(x)$ for all $x\in I_0$.
\end{itemize}
So $d(T_a,T_{a'})\leq a_0^{-1}(|a-a'|+|b-b'|)$ as $\delta>0$ could be
chosen arbitrarily small,

\section{Transport coefficients}
\label{sec:transport}

We apply the results of the previous sections to determine transport
coefficients of the deterministic random walks generated by the maps
$T_\lambda=T_{a,b}$ from subsection~\ref{subsec:Tab}. The random
walks in question are $S_{\lambda,n}=\sum_{k=0}^{n-1}\psi_\lambda\circ
T_\lambda^k$ with
\begin{equation}
  \label{eq:psi-def}
  \psi_\lambda(x)=
  (a-1)x+b\;.
\end{equation}
It is an easy exercise to see that Hypothesis~\ref{hypo:psi-bounded}
and~\ref{hypo:psi-dependence} as well as their strengthening
\eqref{eq:strong-psi-dependence} are satisfied: $\Var(\psi_{a,b})\leq
2(a-1+|b|)<2(a_1+1)=:C_3$ and
$|\psi_{a,b}-\psi_{a',b'}|_1\leq\frac12\Var(\psi_{a,b}-\psi_{a',b'})\leq|a-a'|+|b-b'|$.

For later use we note that the maps $T_{a,b}$ and $T_{a,-b}$ are conjugate in the sense that
$T_{a,b}(-x)=-T_{a,-b}(x)$, in particular $h_{a,-b}(-x)$ is also an invariant
density for $T_{a,b}$ and, by uniqueness, $h_{a,b}(x)=h_{a,-b}(-x)$.


We first note the following explicit form of the drift:
\begin{equation}
  \label{eq:drift}
  J(\lambda)=J(a,b)
  :=
  \int\psi_{a,b} h_{a,b}\,dm
  =
  b+(a-1)\int xh_{a,b}(x)\,dx\;.
\end{equation}
As noted above,
$h_{a,0}(x)=h_{a,0}(-x)$. Hence $J(a,0)=0$.\footnote{In
\cite{Klages-96} it was conjectured, based on analyzing these deterministic
random walks in terms of Markov partitions, that for $b=0$ and $a>2$
the maps $T_\lambda$ exhibit a central limit theorem and that a diffusion
coefficient exists, which is confirmed by (\ref{eq:cml}).}

\subsection{Upper bounds for the modulus of continuity of the drift and the
  diffusion coefficient}
Now we apply
Corollary~\ref{koro:J} and Proposition~\ref{lemma:D} to our setting.

\begin{prop}\label{prop:main}
  For the family of maps $(T_\lambda: \lambda\in\Lambda)$ defined
  above, there are constants $K_3,K_4>0$ such that the drift
  $J(\lambda):=J(T_{\lambda})$ and the diffusion coefficient
  $D(\lambda):=D(T_{\lambda})$, $\lambda=(a,b)$, satisfy
  \begin{align}
    |J(\lambda)-J(\lambda')|
    &\leq
    K_3\cdot|\lambda-\lambda'|\cdot\big(1+\big|\log|\lambda-\lambda'|\big|\big)\quad(\lambda,\lambda'\in\Lambda)\\
    |D(\lambda)-D(\lambda')|
    &\leq
    K_4\cdot|\lambda-\lambda'|\cdot\big(1+\big|\log|\lambda-\lambda'|\big|\big)^2\quad(\lambda,\lambda'\in\Lambda)\;.
  \end{align}
\end{prop}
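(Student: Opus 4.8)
The plan is to recognize that Proposition \ref{prop:main} is not a new estimate at all, but a direct specialization of the abstract bounds of Corollary \ref{koro:J} and Proposition \ref{lemma:D} to the concrete family $T_{a,b}$ and observable $\psi_\lambda$. Accordingly, I would split the argument into a \emph{verification} phase (checking that every hypothesis required by those two results holds for this family, with the parameter metric controlling the abstract metric $d$) and a short \emph{conversion} phase (rewriting the resulting moduli $\ell_1$, $\ell_2$ evaluated at $d(T_\lambda,T_{\lambda'})$ in terms of $|\lambda-\lambda'|$).

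For the verification phase I would simply assemble what Section~\ref{sec:transport} and Lemma~\ref{lemma:var-contraction} already provide. Lemma~\ref{lemma:var-contraction} furnishes Hypotheses~\ref{hypo:norm-bounded} and~\ref{hypo:mixing} (indeed the stronger \eqref{eq:BV-mixing}) with the explicit constants $C_1=1+a_0/(a_0-2)$, $\gamma=2/a_0$, $C_2=1$; the estimates recorded just after \eqref{eq:psi-def} give Hypotheses~\ref{hypo:psi-bounded} and~\ref{hypo:psi-dependence} together with their strengthening \eqref{eq:strong-psi-dependence} for $\psi_\lambda(x)=(a-1)x+b$. Condition~\eqref{T1} is immediate since $\Lambda$ is compact, and \eqref{T2} holds with $L=a_0^{-1}$ by the graph-distance computation at the end of subsection~\ref{subsec:Tab}, namely $d(T_{a,b},T_{a',b'})\le a_0^{-1}(|a-a'|+|b-b'|)\le\sqrt2\,a_0^{-1}|\lambda-\lambda'|$. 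With all hypotheses in force, Corollary~\ref{koro:J} yields $|J(\lambda)-J(\lambda')|\le K_2\,\ell_1(d(T_\lambda,T_{\lambda'}))$ and Proposition~\ref{lemma:D} yields $|D(\lambda)-D(\lambda')|\le K_3\,\ell_2(d(T_\lambda,T_{\lambda'}))$, with $\ell_k$ as in \eqref{eq:mdoulus-of-cont}.

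The conversion phase is the only step with actual content. Writing $\delta:=|\lambda-\lambda'|$ and $\tilde L:=\sqrt2\,a_0^{-1}$, I would use $d(T_\lambda,T_{\lambda'})\le\tilde L\delta$ and the monotonicity of $\ell_k$ to pass to $\ell_k(\tilde L\delta)$, then prove a scaling bound of the form $\ell_k(\tilde L\delta)\le C_k\,\ell_k(\delta)$. This is elementary: a one-line computation of $\ell_k'(u)=(1-\log u)^{k-1}(1-k-\log u)$ shows $\ell_k$ is increasing on $(0,e^{1-k})$, and for small $\delta$ the factor $1+|\log\tilde L+\log\delta|\le(1+|\log\tilde L|)(1+|\log\delta|)$ gives $\ell_k(\tilde L\delta)\le\tilde L(1+|\log\tilde L|)^k\,\ell_k(\delta)=\tilde L(1+|\log\tilde L|)^k\,\delta(1+|\log\delta|)^k$. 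For $\delta$ bounded away from $0$ up to $\diam\Lambda$ the asserted inequalities hold after merely enlarging the constants, since $J$ and $D$ are bounded on the compact $\Lambda$ while $\delta(1+|\log\delta|)^k$ is bounded below there. Absorbing $K_2$, $K_3$, $\tilde L$ and the logarithmic factors into the final constants produces the claimed $K_3$ (for $J$) and $K_4$ (for $D$).

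I do not expect a serious obstacle here, precisely because all the genuine analysis was carried out in Section~\ref{sec:general}: the present statement is an instantiation. The single point deserving care is the scaling/monotonicity bookkeeping for $\ell_k$ — handling the non-monotone tail of $\ell_k$ and the regime where $\delta$ is not small via compactness and boundedness of $J,D$ — and making sure the Lipschitz constant $\tilde L$ and its logarithm are cleanly absorbed so that the modulus of continuity retains exactly the forms $|\lambda-\lambda'|(1+|\log|\lambda-\lambda'||)$ and $|\lambda-\lambda'|(1+|\log|\lambda-\lambda'||)^2$.
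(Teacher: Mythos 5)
Your proposal is correct and follows exactly the paper's route: the paper proves Proposition~\ref{prop:main} by citing Lemma~\ref{lemma:var-contraction}, the estimates after \eqref{eq:psi-def}, and the bound $d(T_{a,b},T_{a',b'})\le a_0^{-1}(|a-a'|+|b-b'|)$ to invoke Corollary~\ref{koro:J} and Proposition~\ref{lemma:D}. Your explicit bookkeeping for absorbing the Lipschitz constant into the moduli $\ell_1,\ell_2$ is the only step the paper leaves implicit, and you handle it correctly.
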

\begin{coro}\label{coro:main}
  \begin{enumerate}[a)]
  \item The graph of $D:\Lambda\to\rz$ has box- and Hausdorff-dimension $2$.
  \item For each $b\in\rz$, the graph of $D_b:[a_0,a_1]\to\rz$,
    $D_b(a)=D(a,b)$, has box- and Hausdorff-dimension $1$.
  \item For each $a>2$, the graph of $D_a:[-\frac12,\frac12]\to\rz$,
    $D_a(b)=D(a,b)$, has box- and Hausdorff-dimension $1$.
  \end{enumerate}
\end{coro}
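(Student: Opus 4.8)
The plan is to derive all three dimension statements from the single modulus-of-continuity estimate supplied by Proposition~\ref{prop:main}. That estimate controls $D$ and, by restriction to sections, also $D_b$ and $D_a$ by the same function $\ell_2$: setting $\lambda=(a,b)$, $\lambda'=(a',b)$ with fixed $b$ in $|D(\lambda)-D(\lambda')|\le K_4\,\ell_2(|\lambda-\lambda'|)$ gives $|D_b(a)-D_b(a')|\le K_4\,\ell_2(|a-a'|)$, and symmetrically $|D_a(b)-D_a(b')|\le K_4\,\ell_2(|b-b'|)$. Thus in every case we face a real-valued function $f$ on a $d$-dimensional box ($d=2$ in a), $d=1$ in b) and c)) whose modulus of continuity is $\ell_2$, and the whole point is that this polylogarithmic correction to Lipschitz continuity is too weak to push the dimension of the graph above $d$.

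For the lower bound I would use that the coordinate projection $\pi:\graph(f)\to\Lambda$ (respectively onto the relevant segment), $\pi(\lambda,t)=\lambda$, is $1$-Lipschitz and maps the graph bijectively onto the domain. Since Lipschitz maps increase neither Hausdorff nor box-counting dimension, one gets $\dim_H(\text{domain})\le\dim_H\graph(f)$ and the analogous inequalities for $\underline{\dim}_B$ and $\overline{\dim}_B$; as the domain is a full $d$-dimensional box this yields the lower bound $d$ for each dimension in question.

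For the upper bound I would run the standard covering estimate for graphs. Fix a small $\delta>0$ and cover the $d$-dimensional domain by $\asymp\delta^{-d}$ cubes of side $\delta$. Over such a cube $Q$ the oscillation of $f$ is bounded, using monotonicity of $\ell_2$ near $0$, by $\operatorname{osc}_Q f\le K_4\,\ell_2(\diam Q)=K_4\,\ell_2(\sqrt d\,\delta)$, so the portion of the graph above $Q$ meets at most $1+\ell_2(\sqrt d\,\delta)/\delta$ boxes of side $\delta$ in the vertical direction; recalling $\ell_2(u)/u=(1+|\log u|)^2$, this is $\le C'(1+|\log\delta|)^2$. Hence the graph is covered by $N(\delta)\le C\,\delta^{-d}(1+|\log\delta|)^2$ boxes of side $\delta$, and therefore
\begin{equation*}
  \overline{\dim}_B\graph(f)
  =
  \limsup_{\delta\to0}\frac{\log N(\delta)}{\log(1/\delta)}
  \le
  \limsup_{\delta\to0}\frac{d\log(1/\delta)+2\log(1+|\log\delta|)+\log C}{\log(1/\delta)}
  =
  d,
\end{equation*}
since $\log(1+|\log\delta|)=o(\log(1/\delta))$ as $\delta\to0$.

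Combining the two bounds with the universal chain $\dim_H\le\underline{\dim}_B\le\overline{\dim}_B$ squeezes every dimension to the common value $d$: $d=2$ for the full graph in a), and $d=1$ for the sections in b) and c). I do not expect a real obstacle here; the only step requiring a little care is the upper-bound count, namely verifying that the $\ell_2$-correction enters only through the factor $(1+|\log\delta|)^2$, whose logarithm is a negligible $\log\log(1/\delta)$ against $\log(1/\delta)$, so that the dimension is pinned \emph{exactly} at $d$ rather than strictly exceeding it.
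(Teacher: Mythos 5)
Your argument is correct and is essentially the paper's own proof: the lower bound comes for free from the (Lipschitz) projection of the graph onto the $d$-dimensional domain, and the upper bound is the standard column-counting estimate in which the modulus of continuity from Proposition~\ref{prop:main} contributes only the factor $(1+|\log\delta|)^2$, whose logarithm is negligible against $\log(1/\delta)$. The paper carries this out for part a) with squares of side $\approx N^{-1}$ and notes that b) and c) follow in the same way, exactly as you do.
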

\begin{proof}
  Denote by $\dim_B$ and $\dim_H$ the box and Hausdorff dimension,
  respectively.
  Obviously,
  $2\leq\dim_H(\graph(D))\leq\dim_B(\graph(D))$. So it
  remains to show that $\dim_B(\graph(D))\leq2$. To this end subdivide the
  rectangle $\Lambda$ into little squares of equal size $\approx N^{-1}$. For
  each such square $Q$ we have
  \begin{equation}
\label{eq:osc}
  \max\{D(\lambda):\lambda\in Q\}-\min\{D(\lambda):\lambda\in Q\}\leq
  K_4N^{-1}(1+\log N)^2\;.
  \end{equation}
  Hence,
  \begin{equation}
    \dim_B(\graph(D))
    \leq
    \limsup_{N\to\infty}\frac{\log(K_4N^2(1+\log N)^2)}{\log N}=2\;.
  \end{equation}
The two other claims are proved in the same way.
\end{proof}


\begin{remark}\label{remark:Koza}
  Corollary 2 has already been conjectured by Koza \cite{Koza-04}. His
  conjecture was based on calculating the pointwise
  Minkowski-Bouligand dimension for algebraic Markov partition
  parameter values of this family of maps by using the exact solutions
  for drift and diffusion coefficient given in \cite{GrKl-02}. This
  led him to conclude that the oscillation \cite{Tr-95} of
  $D(\lambda)$ is linear in the size of the subinterval multiplied
  with a logarithmic term, cp.\ (20) of \cite{Koza-04} with
  (\ref{eq:osc}) above. The exponent of this logarithmic
  correction was found to be either one or two depending on the type
  of Markov partition.
\end{remark}

\subsection{A closer look at maps with integer slope}
\label{subsec:integer-slope}
We finish this section with a closer look at the functions $D_a(b)$ when $a$
is an integer larger than $2$.  In this case $T=T_{a,b}$ can be seen as an
$a$-fold covering linear circle map, so it leaves Lebesgue measure invariant.
Therefore $h_T=1$ for all such $T$ and the estimates from the proof of
Proposition~\ref{lemma:D} simplify drastically: Fix $a\in\{3,4,5,\dots\}$.
Then $J(a,b)=b$ for all $b$ (see \eqref{eq:drift} and
\cite{GrKl-02,Klages-07}), $\hpsi_{a,b}(x)=(a-1)x=:\hpsi_a(x)$ is the same for
all $b$, and denoting $T_1=T_{a,b}$ and $T_2=T_{a,b'}$ we can replace
estimates \eqref{eq:D-proof-3} - \eqref{eq:D-proof-8} by
\begin{equation}\label{eq:a-1square}
  \begin{split}
    &\int_I P_{T_2}^n(\hpsi_{T_2}h_{T_2})\,\hpsi_{T_2}\,dm-
    \int_I P_{T_1}^n(\hpsi_{T_1}h_{T_1})\,\hpsi_{T_1}\,dm\\
    =& \int_I \hpsi_a(x)\,\hpsi_a(T_2^nx)\,dx
    -\int_I \hpsi_a(x)\,\hpsi_a(T_1^nx)\,dx\\
    =&
    (a-1)^2\left(\int_Ix\,T_{a,b'}^n(x)\,dx-\int_Ix\,T_{a,b}^n(x)\,dx\right)
  \end{split}
\end{equation}
To evaluate this difference assume henceforth that $0\leq b\leq\frac12$. There
is no loss in doing so, because $T_{a,b}$ and $T_{a,-b}$ are conjugate as
obseved above. Define the
``rotation'' $R:I\to I$ by $R(x)=x-\frac b{a-1}\mod(\zz-\frac12)$. It
conjugates $T_{a,b}$ to $T_{a,0}$, namely
\begin{displaymath}\label{eq:conjugacy}
  R(T_{a,0}^nx)=T_{a,b}^n(Rx)\quad\text{for all $x\in I$ and $n\in\nz$.}
\end{displaymath}
Therefore, denoting $\hat b=-\frac12+\frac b{a-1}$ and
$\chi_b(x)=\1{[-\frac12,\hat b)}(x)-\frac b{a-1}$,
\begin{displaymath}
  \int_Ix\,T_{a,b}^n(x)\,dx
  =
  \int_IR(x)\,R(T_{a,0}^nx)\,dx
  =
  \int_I(x+\chi_b(x))(T_{a,0}^nx+\chi_b(T_{a,0}^nx))\,dx
\end{displaymath}
so that
\begin{displaymath}
  \begin{split}
    &\int_Ix\,T_{a,b'}^n(x)\,dx-\int_Ix\,T_{a,b}^n(x)\,dx\\
    =& \int_IP_{a,0}^nx\cdot(\chi_{b'}(x)-\chi_{b}(x))\,dx
    +\int_IP_{a,0}^n(\chi_{b'}-\chi_{b})(x)\cdot x\,dx\\
    & +\int_IP_{a,0}^n\chi_{b'}(x)\cdot(\chi_{b'}(x)-\chi_{b}(x))\,dx +
    \int_IP_{a,0}^n(\chi_{b'}-\chi_{b})(x)\cdot\chi_{b}(x)\,dx
  \end{split}
\end{displaymath}
As $\int_I\chi_{b'}(x)\,dx=0$ and $\Var(\chi_{b})=2$ for all $b$, the third term
is of order $\O((2/a)^n|b'-b|)$ by Lemma~\ref{lemma:var-contraction}. As
$P_{a,0}^nx=a^{-n}x$, the first term is at most of the same order.  Therefore
their sums over all $n$ are of the order $\O(|b'-b|)$.

We turn to the two remaining terms. Their sum from $n=0$ to $\infty$ is of the
form
\begin{equation}\label{eq:2-3-sum}
  \sum_{n=0}^{\infty}\int_I P_{a,0}^n(\chi_{b'}-\chi_{b})(x)\cdot g_b(x)\,dx
  =
  \sum_{n=0}^{\infty}\int_I(\chi_{b'}-\chi_{b})(x)\cdot g_b(T_{a,0}^nx)\,dx
\end{equation}
with $g_b(x)=x+\chi_{b}(x)$.  Let $\delta=\frac{b'-b}{a-1}$. As
$\int_{-1/2}^{1/2}(\chi_{b'}-\chi_{b})(x)\,dx=0$,
$\Var(\chi_{b'}-\chi_{b})\leq4$, and $|g_b|\leq2$, the $n$-th integral is of
order $\O((2/a)^n)$.  Hence the sum from
$n=N_\delta:=\lceil\frac{\log|\delta|^{-1}}{\log a}\rceil$ to $\infty$ is of
order $|\delta|$, and it remains to estimate the sum from $n=0$ to
$N_\delta-1$. For these $n$ we have $a^n|\delta|\leq1$.

We start with the special case $b=0$ where we have $\chi_b=0$, so $g_b(x)=x$.
Then the $n$-th term in the sum
\eqref{eq:2-3-sum} evaluates to
\begin{displaymath}
  \int_{-\frac12}^{-\frac12+\delta}T_{a,0}^n(x)\,dx
  =
  \begin{cases}
    \delta\cdot(-\frac12+\frac12a^n\delta)&\text{ if $a$ is odd}\\
    \delta\cdot(\frac12a^n\delta)&\text{ if $a$ is even and $n\geq1$}
  \end{cases}
\end{displaymath}
It follows that the sum from $n=0$ to $N_\delta-1$ in \eqref{eq:2-3-sum} is of
the order $\O(|\delta|)$ if $a$ is even and that it is
$\delta\cdot(-\frac{\log|\delta|^{-1}}{2\log a}+\O(1))$ if $a$ is odd.

Consider next the case $b=\frac12$. As $b=-\frac12$ gives rise
to the same map, the we may assume w.l.o.g. that $-\frac12<b'<b$, \ie $\delta<0$.  If
  $a$ is odd, then $T_{a,0}\hat b=\hat b+\frac12$ and $T_{a,0}(\hat
  b+\frac12)=\hat b$. Therefore
  \begin{displaymath}
    \begin{split}
      \int_I(\chi_{b'}-\chi_{b})(x)\cdot g_b(T_{a,0}^nx)\,dx
      &=
     \int_{\hat{b}}^{\hat{b'}}g_b(T_{a,0}^nx)\,dx\\
     &=
      \begin{cases}
        \int_{0}^{\delta}(\hat b+a^nt)+\chi_b(\hat b+a^nt)\,dt&\text{ if $n$ is even}\\
        \int_{0}^{\delta}(\hat b+\frac12+a^nt)+\chi_b(\hat
        b+\frac12+a^nt)\,dt&\text{ if $n$ is odd}
      \end{cases}\\
      &=
      \begin{cases}
        \delta\hat b+\frac12a^n\delta^2+\delta-\delta\frac b{a-1}&\hspace*{1.75cm}\text{ if $n$ is even}\\
        \delta(\hat b+\frac12)+\frac12a^n\delta^2-\delta\frac b{a-1}&\hspace*{1.75cm}\text{ if $n$ is
          odd}
      \end{cases}
    \end{split}
  \end{displaymath}
  as long as $a^n|\delta|<\frac b{a-1}$, i.e. $n<\tilde
  N_{\delta}:=N_\delta-\frac{\log(a-1)-\log b}{\log a}$. For the remaining $n$
  this identity needs to be modified by at most $|\delta|$. In any case,
  \begin{displaymath}
    \sum_{n=0}^{N_\delta-1}\int_I(\chi_{b'}-\chi_{b})(x)\cdot g_b(T_{a,0}^nx)\,dx
    =
    \frac{N_\delta}2\cdot\frac\delta2+\frac{N_\delta}2\cdot0+\O(\delta)
    =
    \delta\log|\delta|^{-1}\frac{1}{4\log a}+\O(\delta)
    \quad\text{for odd $a$}
  \end{displaymath}
  with a constant in ``$\O$'' that depends on $b$ and $a$ but not on $b'$.  If
  $a$ is even, then $T_{a,0}\hat b=\hat b$, and following the argument for odd
  $a$ and even $n$ we obtain
  \begin{displaymath}
    \sum_{n=0}^{N_\delta-1}\int_I(\chi_{b'}-\chi_{b})(x)\cdot g_b(T_{a,0}^nx)\,dx
    =
    N_\delta\cdot\frac\delta2+\O(\delta)
    =
    \delta\log|\delta|^{-1}\frac{1}{2\log a}+\O(\delta)
    \quad\text{for even $a$.}
  \end{displaymath}

  We turn to more general parameters $0<b<\frac12$.
  We have to estimate
  \begin{equation}
    \begin{split}
      s(\delta)
      :=&
      \sum_{n=0}^{N_\delta-1}\int_I(\chi_{b'}-\chi_{b})(x)\cdot g_b(T_{a,0}^nx)\,dx      =
      \sum_{n=0}^{N_\delta-1}\int_{\hat b}^{\hat
        b+\delta}g_b(T_{a,0}^nx)\,dx\\
      =&
      \sum_{n=0}^{N_\delta-1}\int_{0}^{\delta}g_b(T_{a,0}^n(\hat b)+a^nt)\,dt
    \end{split}
  \end{equation}
  The details of this estimate depend
  strongly on the distributional properties of the orbit of $\hat b$ under
  $T_{a,0}$ and we discuss only two particular but important cases where the
  situation does not become too complicated. First we look at such $b$ for
  which the orbit of $\hat b$ is eventually periodic but where the periodic
  part does neither contain $-\frac12$ nor $\hat b$. (This is a countable
  dense set of parameters.) In this case one can argue as above for
  $b=\frac12$ and odd $a$ and show that
  \begin{displaymath}
    s(\delta)
    =
    \delta\log|\delta|^{-1}\frac{C_b}{\log a}+\O(\delta)
  \end{displaymath}
  with $C_b=\int_I g_b(x)\,d\mu_b(x)$ where $\mu_b$ is the equidistribution on the
  periodic part of the orbit of $\hat b$. Exceptionally this may be zero, but
  typically it won't.

  Next we look at Lebesgue typical points $\hat b$, i.e. at Lebesgue typical
  parameters $b$. For fixed $\delta$ we interpret $s(\delta)$ as a random
  variable where randomness is introduced via the parameter $b\in
  (0,\frac12)$. We are going to show that the random variables $\delta^{-1}
  N_\delta^{-\frac12}s(\delta)$ converge in distribution to a mixture of
  Gaussians, \ie
  \begin{equation}
    \label{eq:CLT}
    \L(\delta^{-1}
    N_\delta^{-\frac12}s(\delta))\Rightarrow\int_0^1\N(0,\sigma^2_z)\,dz
    \quad\text{as }\delta\to0
  \end{equation}
  with suitable variances $\sigma^2_z>0$ that depend on the fixed parameter
  $a$. This shows that approximately $s(\delta)=
  \delta(\log|\delta|^{-1})^{\frac12}\,Z$ with a random variable $Z$ that is a
  mixture of Gaussians which depends only on the fixed integer parameter $a$.

As a first step we compare $s(\delta)$ to
  $\sum_{n=0}^{N_\delta-1}\delta\cdot g_b(T_{a,0}^n\hat b)$. As
  $g_b(x)=x+\chi_b(x)$, we can estimate the difference for the $x$- and the
  $\chi_b(x)$-constributions separately. For the $x$-contribution the
  difference is easily seen to be of order $\O(\delta)$. For the
  $\chi_b(x)$-contribution we estimate each of the last $L_\delta:=\lceil\frac3{\log a}\log N_\delta\rceil$
  terms of the sum by $2$ thus getting a contribution of order
  $\O(\delta\log N_\delta)$. For the remaining terms we note the following two
  estimates which are obvious from a short look at the graph
  of $T_{a,0}^n$:
  \begin{displaymath}
    \begin{split}
      m\{b\in I:|T_{a,0}^n(\hat b)-(-\frac12)|<a^n\delta\},\;
      m\{b\in I:|T_{a,0}^n(\hat b)-\hat b|<a^n\delta\}
      &\leq
      4a^n\delta\ ,
    \end{split}
  \end{displaymath}
  so that
  \begin{displaymath}
    \begin{split}
    &m\left\{b\in I:\,\exists n\in\{0,\dots,N_\delta-L_\delta-1\}\text{ s.th. }|T_{a,0}^n(\hat b)-(-\frac12)|<a^n\delta
    \text{ or }|T_{a,0}^n(\hat b)-\hat b|<a^n\delta\right\}\\
    &\leq
    \frac8{a-1}a^{-L_\delta}
    =
    \frac8{a-1}N_\delta^{-3}\ .
    \end{split}
  \end{displaymath}
It follows that $\sum_{n=0}^{N_\delta-L_\delta-1}\int_0^\delta \chi_b(T_{a,0}^n(\hat
b)+a^nt)\,dt= \sum_{n=0}^{N_\delta-L_\delta-1}\delta\chi_b(T_{a,0}^n\hat
b)$ except on a set of $b$ of Lebesgue measure at most $\frac8{a-1}N_\delta^{-3}$.
Hence, observing that $L_\delta N_\delta^{-1}\to0$ as $\delta\to0$, the
convergence in \eqref{eq:CLT} will follow once we have proved
\begin{equation}
  \label{eq:CLTbis}
  \L(Y_{N_\delta})\Rightarrow\int_0^1\N(0,\sigma^2_z)\,dz\quad\text{as
  }N_\delta\to\infty
\end{equation}
where $Y_N(b):=N^{-\frac12}\sum_{n=0}^{N-1}g_b(T^n\hat b)$ and $b$ is
uniformly distributed in the interval $(0,\frac12)$. (To ease the notation we
abbreviate $T_{a,0}$ by $T$.) As the single contributions to the sum in $Y_N$
depend on $b$ via $T^n(\hat b)$ and $b$ itself, this is not the situation of
the usual central limit theorem, so we treat the problem in two steps:\\[1mm]
\emph{Step 1:} For fixed $z\in(0,1)$ consider
$Y_N^z(b):=N^{-\frac12}\sum_{n=0}^{N-1}g_z(T^n\hat b)$.  It is a well known
general fact that, for fixed $z$, the $Y_N^z$ converge in distribution to some
$\N(0,\sigma^2_z)$ (see \eg \cite{keller-80,HK-82,RE-83}) - except for the
strict positivity of $\sigma^2_z$. To prove this we use \cite[Lemma 6]{RE-83}:
suppose for a contradiction that $\sigma^2_z=0$. Then there is a function
$\psi:I\to\rz$ of bounded variation such that $g_z(x)=\psi(Tx)-\psi(x)$ for
Lebesgue-a.e. $x$. Let $M:=2\sup|\psi|$.  Then
$\left|\sum_{k=0}^{n}g_z(T^kx)\right|\leq M$ for all $n$ and a.a.  $x$.
Looking at suitable periodic orbits it is easy to see that there are $n=n_0$
and $x=x_0$ for which this sum is larger than $M+2$. But then, as both $T$ and
also $g_z$ are at least one-sided continuous, there is a small interval close
to $x_0$ on which the same sum is larger than $M+1$, which contradicts the
above bound that holds for all $n$ and Lebesgue-a.a. $x$.
\\[1mm]
\emph{Step 2:} We need a number of preparations:
\begin{enumerate}[(i)]
\item\label{list:i} Let $J:=(-\frac12,-\frac12+\frac1{2(a-1)})$ be the interval through
  which $\hat b$ ranges when $b$ is chosen randomly from $(0,\frac12)$.
\item\label{list:ii} Let $(r_j)_j>0$ be any sequence of natural numbers tending to
infinity and such that $r_j\leq j^{\frac14}$ for all $j$. For each $j$ denote
by $C_j\subset I=[-\frac12,\frac12]$ a set of points that subdivides $I$ into
$a^{r_j}$ intervals of the same length which are mapped onto $I$ bijectively
by $T^{r_j}$. (If $a$ is odd take $C_j:=T^{-r_j}\{-\frac12\}$, if $a$ is even
take $C_j:=T^{-r_j}\{0\}$.) For $z\in C_j$ denote by $I_z^j$ the subinterval
with left endpoint $z$.
\item\label{list:iii}
$\|Y_N-Y_N\circ T^{2r_N}\|_\infty=\O(r_NN^{-\frac12})=\o(1)$ as $N\to\infty$
because the two sums involved differ only by $4r_N$ terms.
\item\label{list:iv}
  $\Var(P_T^{r_N}(\frac1{m(J)}\1{J}-1))\leq(\frac2a)^{r_N}\Var(\frac1{m(J)}\1{J}-1)=\o(1)$
  as $N\to\infty$ by Lemma~\ref{lemma:var-contraction}.
\item\label{list:v} $m\{x\in I_z^N:Y_N^z(T^{r_N}x)\neq Y_N(T^{r_N}x)\}=a^{-r_N}m\{x\in
  I:Y_N^z(x)\neq Y_N(x)\}\leq a^{-r_N}m(I_z^{N})$.
\end{enumerate}
In order to prove \eqref{eq:CLT} we now proceed as follows: it suffices to show that for each bounded
Lipschitz function $\phi:\rz\to\rz$ holds
\begin{equation}
  \label{eq:Lip-convergence}
  \frac1{m(J)}\int_J\phi(Y_N(b))\,db
  =
  \int_0^1\left(\int_I\phi(b)\,d\N(0,\sigma^2_z)(b)\right)dz+\o(1)\quad\text{as
  }N\to\infty\ .
\end{equation}
To simplify the notation we write $\int_J\phi(Y_N)\,dm$ instead of
$\int_J\phi(Y_N(b))\,db$ \etc.
\begin{displaymath}
  \begin{split}
    \frac1{m(J)}\int_J\phi(Y_N)\,dm
    &=
    \frac1{m(J)}\int_J\phi(Y_N\circ T^{2r_N})\,dm+\o(1)\text{\qquad by
      (\ref{list:iii})}\\
    &=
    \int_IP_T^{r_N}(m(J)^{-1}\1{J})\cdot\phi(Y_N\circ T^{r_N})\,dm+\o(1)\\
    &=
    \int_I\phi(Y_N\circ T^{r_N})\,dm+\o(1)\text{\qquad by
      (\ref{list:iv})}\\
    &=
    \sum_{z\in C_N}\int_{I_z^N}\phi(Y_N^z\circ T^{r_N})\,dm+\o(1)\text{\qquad by
      (\ref{list:v})}\\
    &=
    \sum_{z\in C_N}\int_IP_T^{r_N}\1{I_z^N}\cdot\phi(Y_N^z)\,dm+\o(1)\\
    &=
    \sum_{z\in C_N}m(I_z^N)\int_I\phi(Y_N^z)\,dm+\o(1)\\
    &=
    \sum_{z\in C_N}m(I_z^N)\int_I\phi\,d\N(0,\sigma^2_z)+\o(1)\qquad\text{by step 1}\\
    &=
    \int_0^1\left(\int_I\phi\,d\N(0,\sigma^2_z)\right)dz+\o(1)\qquad\text{as }N\to\infty
  \end{split}
\end{displaymath}
where one has to choose a sufficiently slowly growing sequence $(r_N)$ in the
second last equality.


  \paragraph{Summary of results for integer $a$} In view of the factor
  $(a-1)^2$ in \eqref{eq:a-1square} and of the definition of
  $\delta=\frac{b'-b}{a-1}$ the above discussion shows:
  \begin{enumerate}[(1)]
  \item For even $a\ge4$,
    \begin{displaymath}
      \begin{split}
        D_a(b')-D_a(0)&=\O(b')\quad\text{ and }\\
        D_a(b')-D_a(\frac12) &=\frac{a-1}{2\log
          a}(b'-\frac12)\log|b'-\frac12|^{-1}+\O(b'-\frac12)
      \end{split}
    \end{displaymath}
    \label{obs:even}
  \item For odd $a\ge3$,
    \begin{displaymath}
      \begin{split}
        D_a(b')-D_a(0)&=-\frac{a-1}{2\log a}b'\log|b'|^{-1}+\O(b')\quad\text{ and}\\
        D_a(b')-D_a(\frac12)&=
        \frac{a-1}{4\log
          a}(b'-\frac12)\log|b'-\frac12|^{-1}+\O(b'-\frac12)
      \end{split}
    \end{displaymath}
    \label{obs:odd}
  \item If $b$ is such that $\hat b=-\frac12+\frac{b}{a-1}$ is eventually
    periodic under $T_{a,0}$ and the periodic part of the orbit neither
    contains $-\frac12$ nor $\hat b$, then there is a constant $C_{a,b}$ such
    that
    \begin{displaymath}
      D_a(b')-D_a(b)=C_{a,b}\,(b'-b)\log|b'-b|^{-1}+\O(b'-b).
    \end{displaymath}
    This
    generalizes observations~(\ref{obs:even}) and (\ref{obs:odd}). See also
    Remark~\ref{remark:Koza}.
    \label{obs:evper}
  \item For fixed $\delta$ and random $b$ drawn uniformly from $(0,\frac12)$ or from
    $(-\frac12,0)$,
    \begin{displaymath}
     D_a(b+\delta(a-1))-D_a(b)=C_a\,\delta(\log|\delta|^{-1})^{1/2}\,Z_\delta+\O(\delta) \text{ as }\delta\to0
    \end{displaymath}
    with a constant
    $C_a>0$ and random variables $Z_\delta$ which all have the same
    distribution - a mixture of
    Gaussians as in \eqref{eq:CLT} depending only on the fixed parameter $a$.
    \label{obs:random}
  \end{enumerate}
  In view of these findings the graphs of
  $D_a:[-\frac{1}{2},\frac{1}{2}]\to\rz$ are fractal in the sense of
  Section 12.2 in \cite{Tr-95} -- at least for integer values of $a$
  --, although they have box - and Hausdorff-dimension $1$.


\begin{remark}
In Refs.~\cite{Klages-96,GaKl-98,Klages-07} it has been shown
that for $b=0$ the dynamics of $\psi_\lambda$ can be expressed in
terms of generalized Takagi (or de Rham) functions. Analogous
conclusions hold for the case of $b\neq 0$ \cite{Klages-07b}.  The
above results are thus intimately related to continuity properties of
this class of functions under parameter variation.  These functions
are defined by simple functional recursion relations and have been
introduced in the literature completely independently from the
diffusion problem considered here.
\end{remark}

\section{Numerical results}
\label{sec:numerical}

Guided by the analytical results of the previous sections, in this
  part we numerically study the transport coefficients generated by
  the piecewise linear maps (\ref{eq:pwl}). We are aiming particularly
  at a numerical verification of Proposition~\ref{prop:main} and of
  the summary of Section~\ref{subsec:integer-slope}. To some
  extent the numerics enables us to go beyond the analytical
  findings as far as detailed local properties of these transport
  coefficients are concerned.

Let us start with a reminder of previous results: Exact analytical
solutions for drift $J$ and diffusion coefficient $D$ for all
parameter values $a,b$ were derived in
\cite{GrKl-02}.\footnote{Another set of formulas was reported in
\cite{Crist-06} but only for $D(a)$.} In \cite{KlKl-03} data sets were
generated from these expressions and analyzed by standard numerical
box counting \cite{Tr-95}. This procedure relies on the assumption
that

\begin{equation}
N(\epsilon)\sim\epsilon^{-B}
\label{eq:boxc}
\end{equation}
for small enough $\epsilon$, where $N$ is the number of square boxes
of side length $\epsilon$ needed to cover the graph of $J$ or $D$, and
$B=\dim_B(\graph)$ defines the box (counting) dimension.  Analysing
$D(a)=D(a,0)$ on $2\le a\le 8$, see Fig.~\ref{fig:fdimd1} (a), based
on $10^6$ data points uniformly distributed in $a$ yielded a box
dimension of $B\simeq1.039$ \cite{KlKl-03}. The inset in
Fig.~\ref{fig:fdimd1} (a) depicts $N(\epsilon)$ for the new, larger
data set of $10^9$ points of $D(a)$ in comparison to
(\ref{eq:boxc}) with the above exponent.  Data and fit are
undistinguishable.

Fig.~\ref{fig:fdimd1} (b) displays the numerical results from
\cite{KlKl-03} for the local box dimension $B(a)$ of $D(a)$. That is,
according to (\ref{eq:boxc}) $B$ was computed locally on a regular
grid of small subintervals $\Delta a$ centered around $a$. The figure
shows that (\ref{eq:boxc}) yields locally different results for
$B(a)$ forming an oscillatory structure that becomes more pronounced
the smaller $\Delta a$.  Consequently, $D(a)$ was said to be
characterized by a ``fractal fractal dimension'' in \cite{KlKl-03}.

\begin{figure}[t]
  \centerline{\includegraphics[width=10cm,angle=-90]{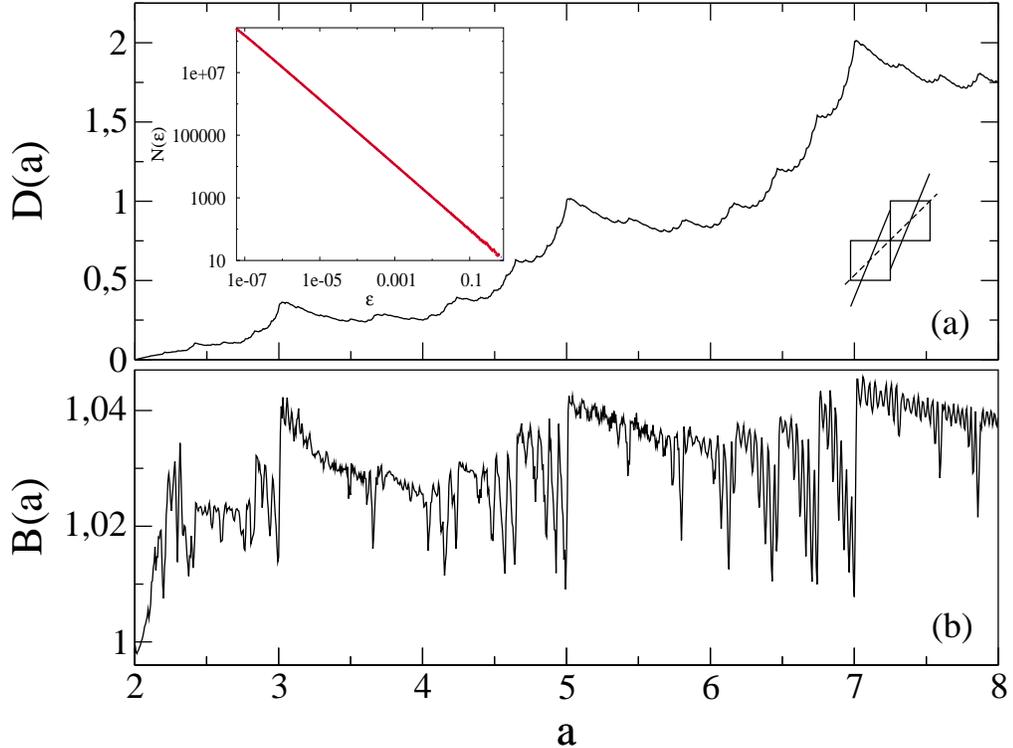}}
\caption{(a) Diffusion coefficient $D(a)$ on the interval $2\le a \le 8$ with
  $2000$ data points for the one-dimensional map (\ref{eq:pwl})
  sketched in the lower right edge with bias $b=0$ and $a$ as the
  slope of the map. The inset of (a) shows standard box counting based
  on $10^9$ data points for $D(a)$, where $N$ is the number of boxes
  of side length $\epsilon$. The dashed line in the inset depicts the
  power law (\ref{eq:boxc}) with box dimension $B=1.039$ as
  computed in \cite{KlKl-03}. (b) displays the box dimension $B(a)$
  computed locally on a regular grid of subintervals of size $\Delta
  a=0.006$ centered around $a$. For each subinterval a data set of
  $10^6$ values has been used, and a running average was performed
  over any three neighboring $B(a)$. Both figures, except the inset,
  are from \cite{KlKl-03}.}
\label{fig:fdimd1}
\end{figure}

\subsection{Box counting for the diffusion coefficient}

Motivated by Proposition~\ref{prop:main} and by \cite{Koza-04}, the
numerical results of \cite{KlKl-03} are now reevaluated and
supplemented by new, further numerical analysis. We start with the
diffusion coefficient $D(a)$.  Corollary~\ref{coro:main} states that
$B(a)=1$ for all intervals $\Delta a$, which is at variance with the
results presented in Fig.~\ref{fig:fdimd1}.  However, in contrast to
the standard box counting assumption (\ref{eq:boxc}),
Proposition~\ref{prop:main} is compatible with the existence of
multiplicative logarithmic terms by giving upper bounds for their
exponents. The discussion in Subsection~\ref{subsec:integer-slope}
shows that these terms do indeed exist.

In detail, Corollary~\ref{coro:main} states an upper bound for the box counting
function $N(\epsilon)$ of $D(a)$ of
\begin{equation}
N(\epsilon)\le K_4\epsilon^{-1}(1-\ln\epsilon)^2\quad .
\label{eq:Nbound}
\end{equation}
This motivates us to plot the product $N\epsilon$ as a function of
$-\ln\epsilon$: For small enough $\epsilon$ and in double-logarithmic
representation one should then see a straight line with the slope
yielding the exponent of the logarithmic term.  Fig.~\ref{fig:fdimd2}
numerically verifies the existence of this term for $D(a)$ on $2\le
a\le 8$: There clearly exists a non-zero exponent, however, in the
numerics $-\ln\epsilon$ is not large enough to overcome the additive
constant in (\ref{eq:Nbound}) for producing a straight line.

\begin{figure}[t]
  \centerline{\includegraphics[width=10cm,angle=-90]{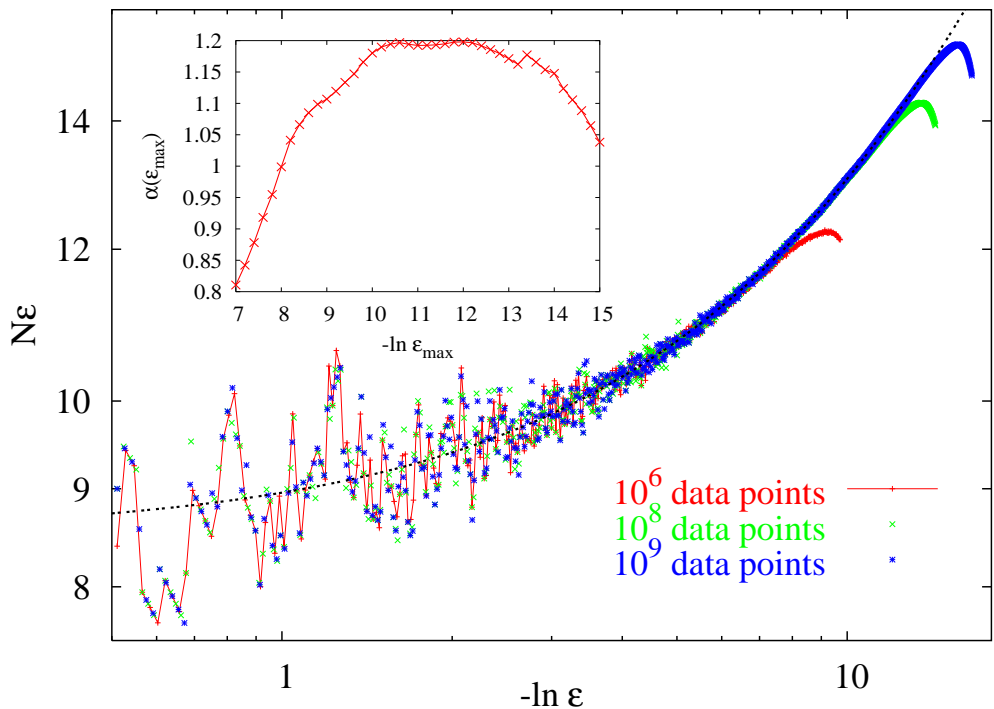}}
\caption{$N$ is the number of boxes of length $\epsilon$
  needed to cover $D(a)$ shown in Fig.~\ref{fig:fdimd1} (a) generated
  from $10^6$, $10^8$ and $10^9$ data points. Motivated by
  (\ref{eq:Nbound}), in contrast to the inset of
  Fig.~\ref{fig:fdimd1} (a) here we plot the product $N\epsilon$ as
  a function of $-\ln\epsilon$ double-logarithmically. The dashed
  black line represents a three-parameter fit for the largest data set
  over $0.5\le -\ln\epsilon\le12$ with the functional form of
  (\ref{eq:Nfit1}).  The inset shows results for the exponent of
  the logarithmic correction $\alpha$ obtained from fits where we vary
  the upper bound $\epsilon_{max}$ of the fit interval.}
\label{fig:fdimd2}
\end{figure}

In Fig.~\ref{fig:fdimd2} three data sets have been plotted consisting
of different numbers of data points for $D(a)$. The bending off of the
graphs at larger $-\ln\epsilon$ reflects that box counting starts to
resolve the single points of all the underlying data sets: From the
figure one can roughly estimate that for a data set of $10^6$ points
for $D(a)$ deviations set in around $-\ln\epsilon_{cut}\simeq7$, or
$\epsilon_{cut}\simeq10^{-3}$. Compared with a separation of $\delta
a=6\cdot 10^{-6}$ between any two data points along the $a$-axis, this
yields a difference of about three orders of magnitude. The same order
of magnitude argument holds if one compares $\epsilon_{cut}$ obtained
approximately for $10^8$ data points from Fig.~\ref{fig:fdimd2} with
the corresponding separation of $\delta a=6\cdot 10^{-8}$ between any
two data points. This leads to the prediction that for the set of
$10^9$ data points $-\ln\epsilon_{cut}\simeq14$ in
Fig.~\ref{fig:fdimd2}.

Inspired by (\ref{eq:Nbound}), we now fit the box counting results
with the function
\begin{equation}
N(\epsilon)= K_5\epsilon^{-1}(1+K_6\ln\epsilon)^{\alpha}
\label{eq:Nfit1}
\end{equation}
instead of (\ref{eq:boxc}). If this fit function reproduces the
numerically computed $N(\epsilon)$ reasonably well, Proposition~\ref{prop:main}
predicts that $0\le\alpha\le 2$. However, we emphasize that this
Proposition only gives us a strict upper bound -- it does not actually
tell us the ``true'' functional form of the whole graph. We have
indeed checked that fit functions others than (\ref{eq:Nfit1}),
which also obey (\ref{eq:Nbound}), work similarly well. In order
to be close to Proposition~\ref{prop:main} we stick to the fit function
(\ref{eq:Nfit1}) in the following.

The dashed black line in Fig.~\ref{fig:fdimd2} shows a fit of the box
counting results for $10^9$ data points of $D(a)$ with this functional
form.\footnote{For all fits the nonlinear least-squares
  Marquardt-Levenberg algorithm as implemented in gnuplot 4.0 has been
  used.} The inset of Fig.~\ref{fig:fdimd2} depicts results for the
  exponent $\alpha$ computed from different fit intervals
  $[0.5,-\ln\epsilon_{max}]$ for the same data set of $10^9$
  points. It indicates convergence towards $\alpha\simeq1.2 \;
  (-\ln\epsilon_{max}\to12)$. The decrease for $-\ln\epsilon_{max}>12$
  is well in agreement with the cutoff predicted above, which is due
  to the limited data set. Note that the cutoff sets in much later
  than the beginning of the plateau. Hence we conclude that for a data
  set of $10^9$ points for $D(a)$, $2\le a\le 8$, and by assuming the
  fit function (\ref{eq:Nfit1}), the numerical value for the
  exponent of the logarithmic term is $\alpha\simeq 1.2$. This is
  again in agreement with Proposition~\ref{prop:main}.\footnote{We have checked that
  these fit results do
  not significantly depend on the choice of the initial seeds for our
  three fit parameters and that the asymptotic standard error for them
  is less than 10\% for $-\ln\epsilon_{max}>10$. However, in our view
  quantitative error estimates are not reliable in this case, because
  we may not assume that the residua are normally distributed random
  variables.} Note that fits by (\ref{eq:Nfit1}) do not tell the
full story: The numerically exact data in Fig.~\ref{fig:fdimd2} show
the existence of a non-trivial fine structure pointing towards more
complicated functional forms for the ``true'' $N(\epsilon)$, which
should reflect the intricate structure of $D(a)$ in
Fig.~\ref{fig:fdimd1} (a). These irregularities may not be understood
as numerical errors.

\begin{figure}[t]

\vspace*{-5cm}
\subfigure{\centerline{\includegraphics[width=10cm,height=14cm,angle=-90]{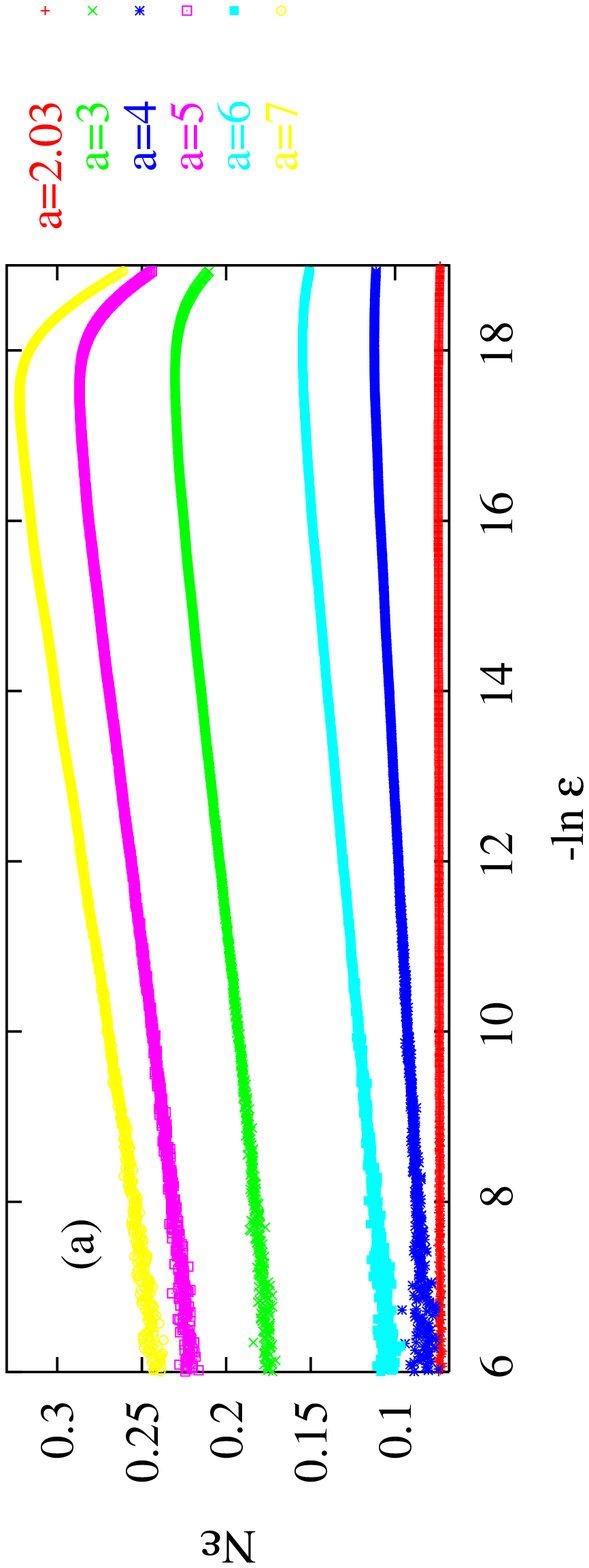}}}\\[-4ex]
\subfigure{\includegraphics[width=5cm,angle=-90]{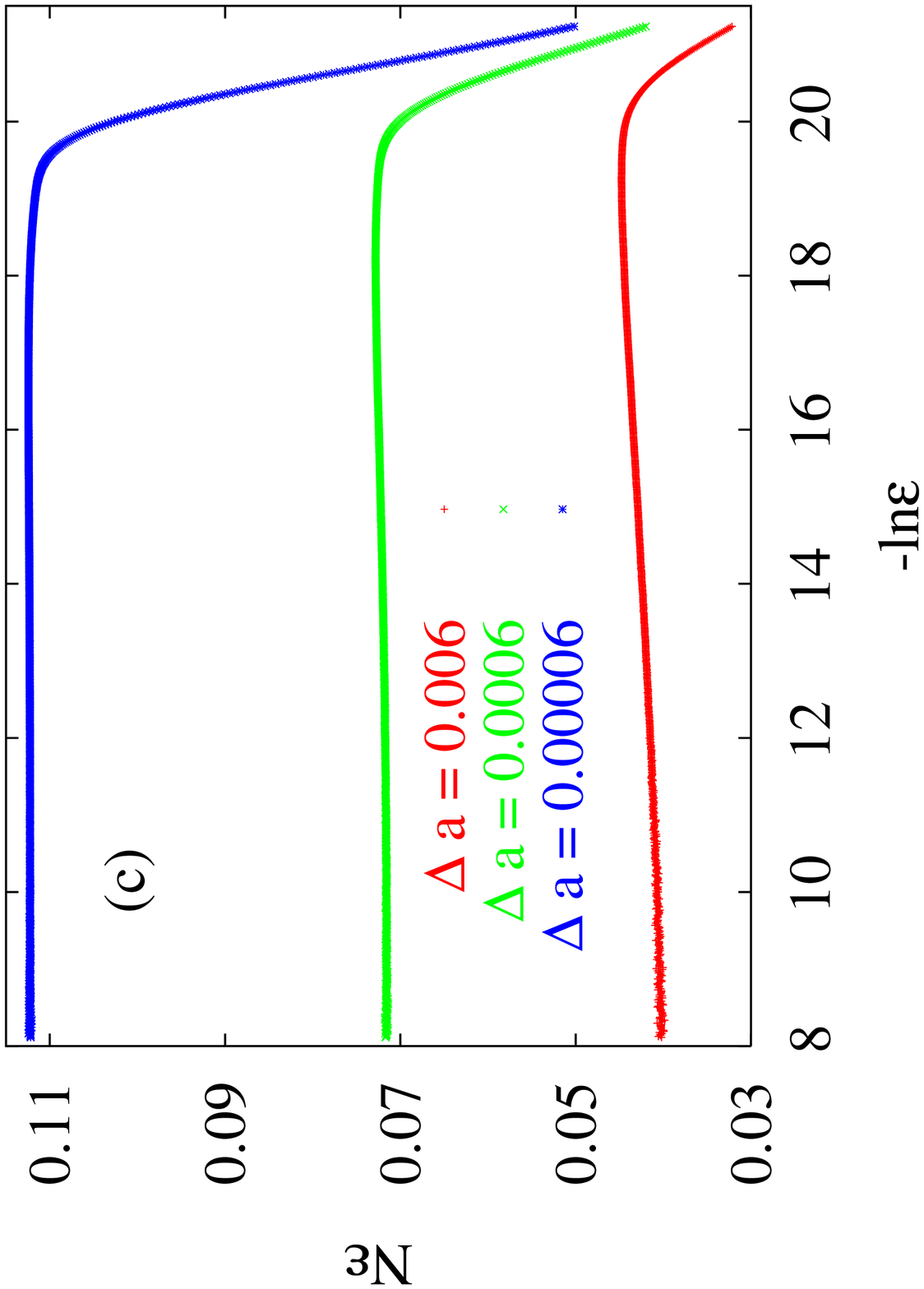}}
\subfigure{\includegraphics[width=5cm,angle=-90]{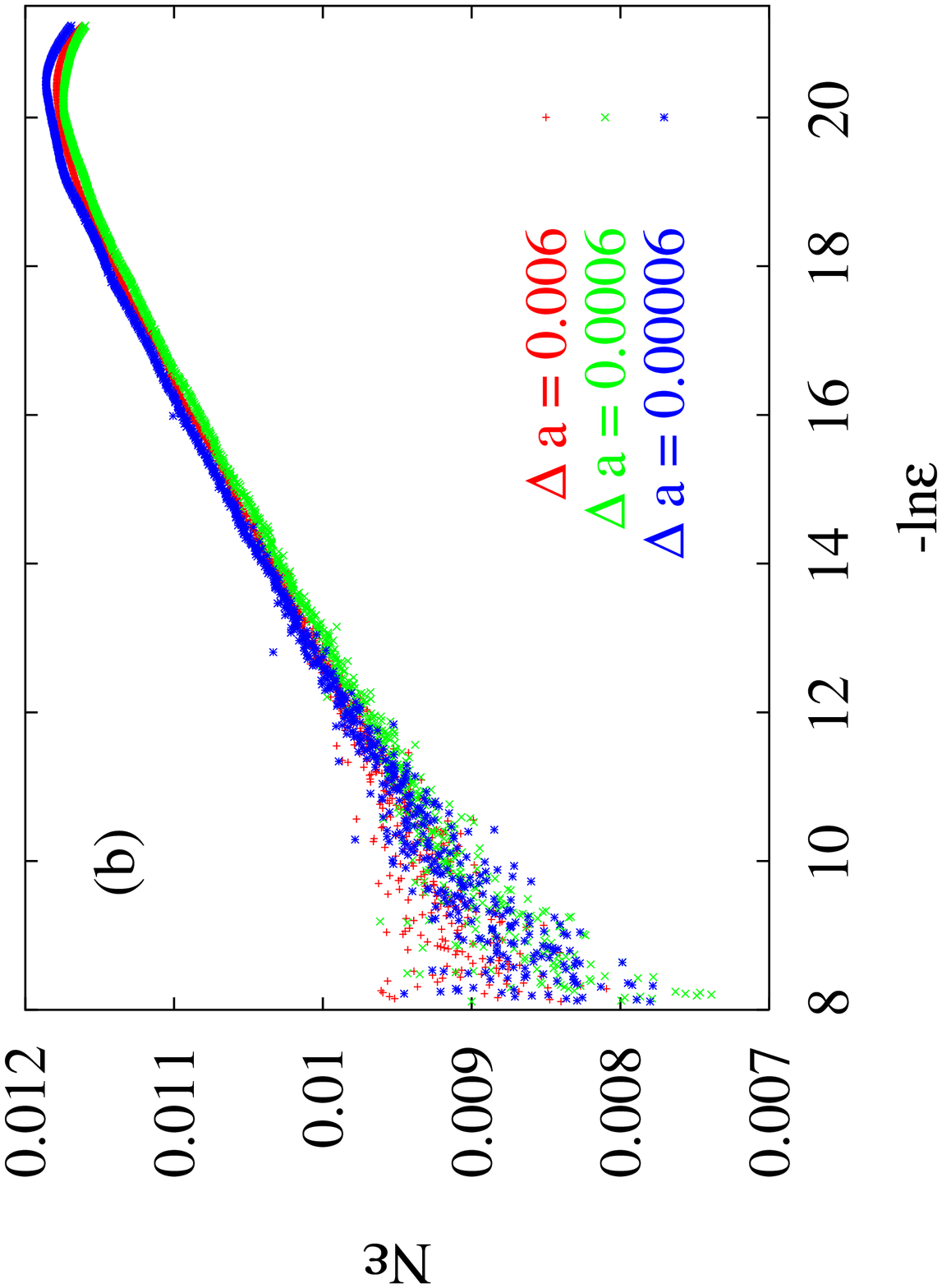}}
\caption{Local variation of the product $N\epsilon$ needed to cover $D(a)$
  around integer values of $a$: (a) shows results for parameter
  intervals of size $\Delta a=0.06$ centered around different $a$,
  based on $10^8$ data points.  (b) displays results for subintervals
  $\Delta a$ all centered around $a=4$, whereas in (c) all
  subintervals converge towards $a=5$. In (b) and (c) the graphs have
  been scaled by multiplying $\epsilon$ with the order of magnitude
  difference between the different values for $\Delta a$.}
\label{fig:fdimd3}
\end{figure}

After having verified the existence of logarithmic contributions on
large parameter intervals we now look at local variations of the
exponent $\alpha$. This is demonstrated by doing box counting for
$D(a)$ on small intervals around integer values of $a$.
Fig.~\ref{fig:fdimd3} (a) reveals that there exist two families of
curves: The one for even $a$ is at the bottom of this figure, whereas
the one for odd $a$ is on top. Additionally, all graphs show up
such that the ones for larger slopes are always on top in both
groups thus creating an oscillatory structure.

We first consider the special case $a=2.03$, where according to
Fig.~\ref{fig:fdimd3} (a) $\alpha\simeq0$. Note that $D(2)=0$,
correspondingly the parameter region just above $a=2$ marks the onset
of diffusion, cf.\ Fig.~\ref{fig:fdimd1} (a). As described in
\cite{Klages-96,KlDo-97,Klages-07}, for $a\to2$ there is asymptotic
convergence of $D(a)$ to the simple random walk solution
$D(a)=(a-2)/(2a)$. This physical argument explains why
$\alpha\to0\:(a\to2)$. There is a trend that larger even integer
slopes in (a) give $0\le\alpha\le 1$ whereas odd $a$ give
$1<\alpha\le2$.  Unfortunately, the fits producing these results are
very unstable, hence even these rough estimates should be taken with
care. In any case, the indicated order of magnitude of $\alpha$
appears to be in agreement with Proposition~\ref{prop:main}. Our fits furthermore
suggest that not only $\alpha$ is a function of $a$ but also that the
other two parameters in (\ref{eq:Nfit1}) are locally varying.
This agrees with conclusions drawn in \cite{KlKl-03}.

Figs.~\ref{fig:fdimd3} (b) and (c) provide a more detailed local
analysis by looking at successively smaller subintervals around two
specific slopes. While (c) suggests $\alpha\to 0\:(\Delta a\to0)$
around $a=5$, (b) with $a=4$ yields approximately $\alpha\to
1\:(\Delta a\to0).$\footnote{Again, the fit results are highly
  unstable, so the latter value should be taken with care.}  Note that
the graphs in (b) and (c) have been scaled as described in the figure.
Interestingly, this transformation leads to a collapse onto a master
curve in (b), whereas it does not work that way in (c).  Similar
observations have been reported in \cite{Koza-04}. Together with the
analytical results of Subsection~\ref{subsec:integer-slope},
Fig.~\ref{fig:fdimd3} thus demonstrates remarkable continuity
properties of $D(a,b)$ around integer slopes, which strongly depend on
the direction in parameter space.

These differences between graphs for odd and even $a$ are consistent
with the local box counting dimension displayed in
Fig.~\ref{fig:fdimd1} (b): They suggest that the oscillatory structure
in $B(a)$ actually reflects local variations of the parameters in
(\ref{eq:Nfit1}) determining the logarithmic corrections, erroneously
being fit in \cite{KlKl-03} with the standard box counting equation
(\ref{eq:boxc}) instead of taking the existence of logarithmic terms
into account. This result is confirmed by covering small parameter
regions around $a=4$ and $a=5$ with non-overlapping sequences of
subintervals and looking for local variations of the box counting
results. Again, one finds oscillations that roughly correspond to the
ones in Fig.~\ref{fig:fdimd1} (b). Although there is no linear
functional relationship between $\alpha(a)$ and $B(a)$, one may thus
argue that Fig.~\ref{fig:fdimd1} (b) tells us something about the
magnitude of local logarithmic corrections.

We remark that with the computing power available to us it was
impossible to produce a graph like Fig.~\ref{fig:fdimd1} (b) for local
values of $\alpha$, because for each $\alpha(a)$ box counting would
have required a data set of at least $10^9$ values of $D(a)$. Such
large data sets appear to be necessary because of monotonicity of the
exponents: if $E$ is a subset of $F$ then $\alpha(E)\le\alpha(F)$.
Local variations of $\alpha$ thus pose a serious problem to any
numerical box counting analysis, since eventually $\alpha(a)$ should
always converge to the largest local exponent.  However, if this
exponent is exhibited just on a tiny subinterval it could be extremely
tedious to detect it numerically. This argument of course also applies
to our previous result of $\alpha\simeq1.2$ for $D(a)$ on $2\le
a\le8$, which strictly speaking only holds for the given data set of
$10^9$ points. We cannot exclude that some tiny interval of $D(a)$
eventually yields a larger value of $\alpha$. In other words, the goal
of our numerical analysis cannot be to compute unambiguous values for
any exponents but rather to demonstrate qualitative and quantitative
order-of-magnitude agreement with Proposition~\ref{prop:main}.

\subsection{Box counting for the drift}

\begin{figure}[t]
\centerline{\includegraphics[width=13cm]{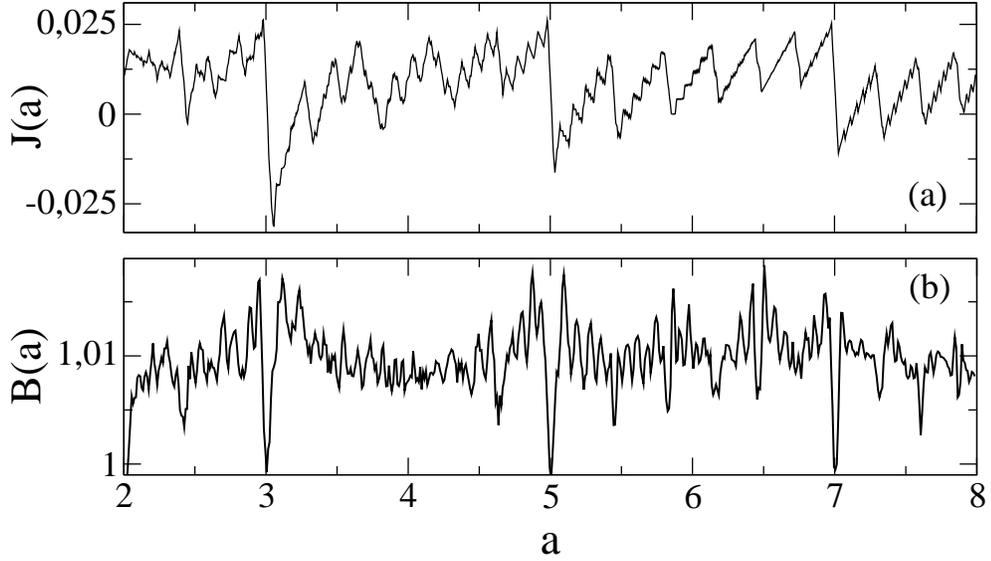}}
\caption{(a) Drift $J(a)=J(a,0.01)$ on the interval $2\le a\le8$
  based on 2000 data points. As in Fig.~\ref{fig:fdimd1}, (b) depicts
  the box dimension $B(a)$ computed locally on a regular grid of
  subintervals of size $\Delta a=0.01$ averaged over any three
  neighboring points. Both figures are from \cite{KlKl-03}.}
\label{fig:fdimd7}
\end{figure}

We continue our numerical analysis by investigating the parameter
dependence of the drift, or current, $J(a,b)$. As for the diffusion
coefficient, we start with a brief reminder of previous results in
form of Fig.~\ref{fig:fdimd7}: Like Fig.~\ref{fig:fdimd1}, it displays
a highly oscillatory structure both in the drift as well as in the
local box dimension as functions of $a$ for fixed $b$, where $B(a)$
has been computed according to (\ref{eq:boxc}) by again disregarding
any logarithmic corrections. Note particularly the pronounced minima
at odd integer values. As before, we now reevaluate these findings on
the basis of Proposition~\ref{prop:main} by taking logarithmic terms
into account.

\begin{figure}[t]
  \centerline{\includegraphics[width=10cm,angle=-90]{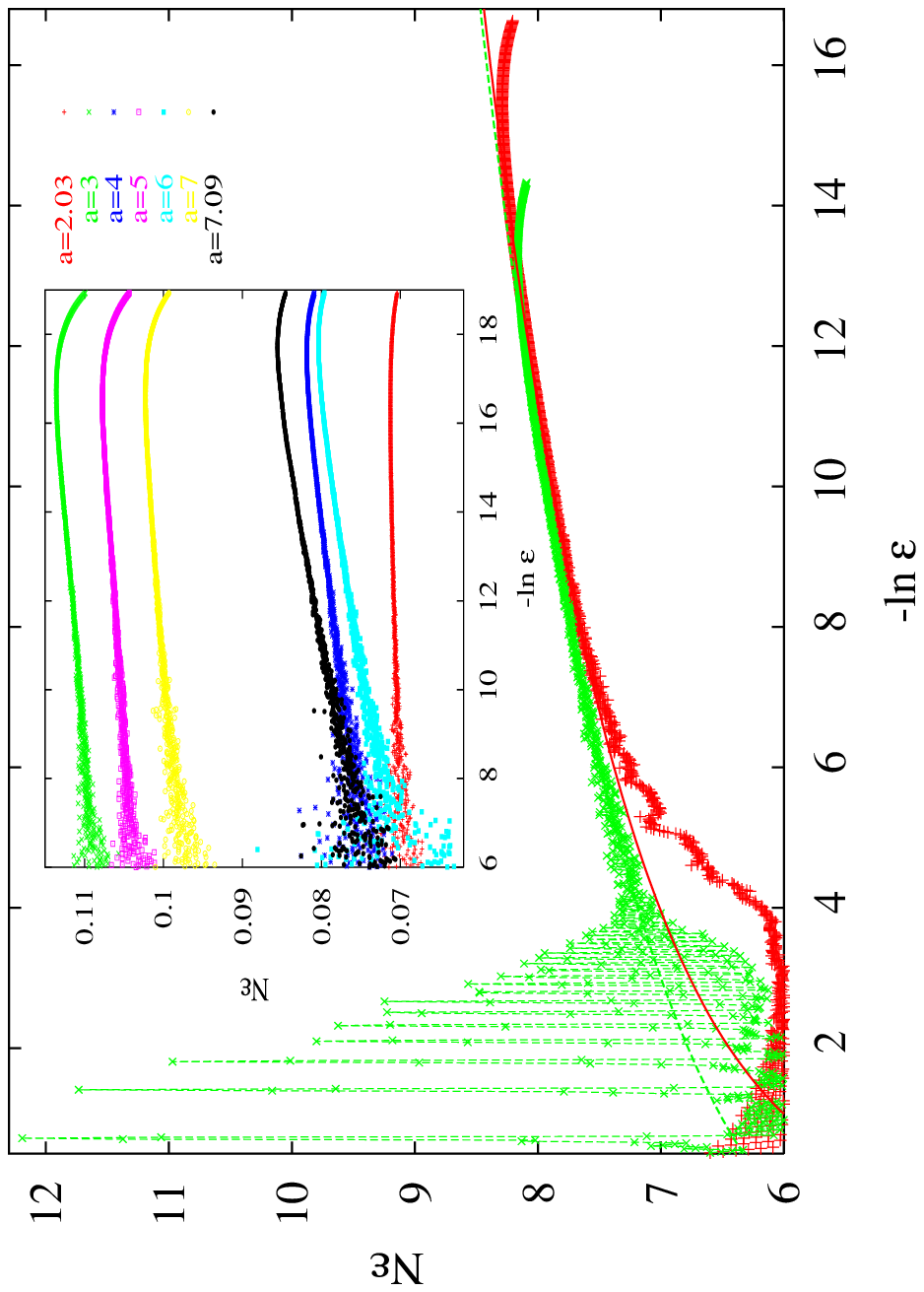}}
\caption{Main graph: Product $N\epsilon$ as a function of
  $-\ln\epsilon$ for the drift $J(a,b)$ over the interval $2\le a\le
  8$ at $b=0.01$ (red '+' symbols, based on $10^9$ data points) and at
  $b=0.49$ (green 'x' symbols, based on $10^8$ data points).  Included
  are two fits over the intervals $8\le-\ln\epsilon\le13$ ($b=0.01$)
  and $4\le-\ln\epsilon\le11.5$ ($b=0.49$). Inset: Local variation of
  the product $N\epsilon$ as a function of $-\ln\epsilon$ for
  parameter intervals of size $\Delta a=0.06$, mostly centered around
  integer values of $a$ and based on $10^8$ data points. The graph for
  $a=7.09$ demonstrates that, in agreement with Fig.~\ref{fig:fdimd7}
  (b), there exist strong local fluctuations of the box counting
  functions under variation of the slope $a$ of the map.}
\label{fig:fdimd8}
\end{figure}

Fig.~\ref{fig:fdimd8} numerically confirms the existence of
logarithmic corrections for $J(a,b)$: There exist non-zero exponents
$\alpha$ as allowed by Proposition~\ref{prop:main}.  Note particularly the
pronounced, different fine structures of both curves displayed in the
main part, which are much stronger than in Fig.~\ref{fig:fdimd2} for
$D(a)$. Due to these oscillations, in case of $J(a,b)$ it is
numerically very difficult to extract reliable values for the
exponents $\alpha$ by using (\ref{eq:Nfit1}). The two fits
included in the main graph yield an order of magnitude of
$\alpha\simeq0.1$, which matches to Proposition~\ref{prop:main}.

The inset of Fig.~\ref{fig:fdimd8} is analogous to
Fig.~\ref{fig:fdimd3} (a) in that it shows box counting results for
the current $J(a,b)\:,\:b=0.01$, mostly at integer values of the slope
$a$. Note that $J(2,0.01)\simeq0$ \cite{GrKl-02}, which marks the
onset of the drift. As we have argued for the diffusion coefficient,
at $a=2.03$ we are thus in a random walk regime for which one may
expect $\alpha\simeq0$, as is shown in the figure. However,
$\alpha(a)\neq0$ in all the other cases of the inset suggesting again
a local variability of $\alpha$ for $J(a,b)$, at least around integer
values of $a$. As in Fig.~\ref{fig:fdimd3} (a) there exist two family
of curves, one for even $a$ at the bottom and one for odd $a$ on
top of the figure. There is also again an additional ordering,
however, here it is such that curves for larger slopes are always at
the bottom in both families of graphs, except at $a=2.03$. The
additional graph for $a=7.09$ exemplifies the strong local variability
of $\alpha$ around $a=7$ which, as well as the difference between odd
and even slopes for box counting results of the drift, agrees with the
oscillations in the local box dimension $B(a)$ shown in
Fig.~\ref{fig:fdimd7}.

Fits for all the inset curves yield a trend towards small exponents
around even and somewhat larger values around odd slopes with an order
of magnitude of $0\le \alpha\le 1$, which appears to be consistent
with Proposition~\ref{prop:main}. However, we emphasize again that these results
give only a rough indication for the numerical reasons discussed
above. Exact results are only available for special cases: As we have
discussed in Subsection~\ref{subsec:integer-slope}, $J(a,b)=b$ for
constant $a\in\mathbb{N}$ under variation of $b$, where we thus have
$\alpha=0$, linear response and a caricature of Ohm's law. For general
$a$ one finds that $J(a,b)/(b|\log |b||)$ is bounded but has no limit
for $b\to0$ \cite{GrKl-02} pointing towards logarithmic corrections.

We have also qualitatively checked graphs of $D(a,b)$ and $J(a,b)$ for
other parameter values, that is, by choosing different values for $a$
and $b$ fixed in the parameter plane and studying the resulting
functions of the remaining free control parameters. Qualitatively, we
obtain results that are analogous to the ones discussed above.

\subsection{Continuity properties of the diffusion coefficient at integer slopes}
\label{subsec:numerics-integer-slope}

The previous two subsections demonstrated a very peculiar behaviour of
local box counting results for drift and diffusion coefficient around
integer slopes $a$ at fixed values of the bias $b$.
Subsection~\ref{subsec:integer-slope}, in turn, gave exact analytical
expressions for the difference $D_a(b')-D_a(b)$ of the diffusion
coefficient as a function of $\Delta b=b'-b$ at integer $a$ in the
limit of small $\Delta b$. This suggests to numerically study the
continuity properties of $D_a(b)$ at fixed integer values of $a$ in
more detail.

In order to access suitably small values of the parameter $\Delta b$,
we have employed the Fortran90 library mpfun90 \cite{Bailey-95} for
arbitrary-precision arithmetic. Using this library we have calculated
the difference quotient $(D_a(b')-D_a(b))/(b'-b)$ of $D$ with fixed
$b$ at values of $\Delta b$ down to
$10^{-200}$. Figure~\ref{fig:variation_in_b} (a) shows a subset of our
results for $a=3$ and $a=4$ at fixed $b \in \{-0.5,0\}$. There is
excellent agreement between the numerical results and the analytical
observations~(\ref{obs:even}) and~(\ref{obs:odd}) of
Subsection~\ref{subsec:integer-slope} predicting straight lines. This
agreement is as good to the limits of attainable precision, and has
been checked for other integer values than those shown in
Fig.~\ref{fig:variation_in_b}.

Figure~\ref{fig:variation_in_b} (b) depicts the diffusion coefficient
$D_a(b)$ at $a=4$ and a blowup around $b=0$, which corresponds to the
two curves in (a) at this $a$ value. Note that there is reflection
symmetry for $D_a(b)$ with respect to $b=-0.5$ and $b=0$. One can see
that at $b=-0.5$, where the difference quotient in (a) displays a
multiplicative logarithmic term, $D_a(b)$ in (b) exhibits a global
maximum in form of a sharp cusp. The global minimum at $b=0$, on the
other hand, is approached in a rather smooth, oscillatory manner
yielding a rounded-off shape, see the inset in (b). This relates to
the difference quotient curve in (a) with zero logarithmic
term. Analogous observations are made for $a=3$, where $D_a(b)$
exhibits local maxima both at $b=-0.5$ and at $b=0$, and for other
integer slopes. We remark that the quite regular structure of $D_a(b)$
in (b), particularly around both local extrema, resembles very much
the one of the fractal generalized Takagi functions studied in
\cite{Klages-96,GaKl-98,Klages-07}.

Observation~(\ref{obs:evper}) generalizes observations~(\ref{obs:even})
and~(\ref{obs:odd}) by stating that logarithmic corrections are typical
for parameter values of $b$ yielding Markov partitions. In
\cite{KlDo-95,Klages-96,KlDo-99,GrKl-02} it has been shown (for $b=0$)
that Markov partition parameter values identify local maxima and
minima of the parameter-dependent diffusion coefficient by relating
them to ballistic and localized orbits of the critical points of the
lifted map, respectively. One may thus speculate that the above
numerical observation holds true for local extrema on finer scales,
that is, that local cusps in $D_a(b)$ reflect logarithmic corrections
in the local difference quotient, whereas rounded-off local extrema
signify the lack of logarithmic terms. See also \cite{Koza-04} for
related results.  Furthermore, in Fig.~\ref{fig:variation_in_b} (a) we
deliberately restricted the range of $\ln (b'-b)$ so that, upon very
close scrutiny, a fine structure of all curves can be seen on top of
the straight line behaviour. Fig.~\ref{fig:variation_in_b} (b)
suggests that this oscillatory fine structure, which yields higher
order corrections to the analytical results of
Subsection~\ref{subsec:integer-slope}, is induced by the fine
structure of $D_a(b)$.

\begin{figure}[t]
\subfigure{\hspace*{-0.2cm}\includegraphics[width=5.5cm,angle=-90]{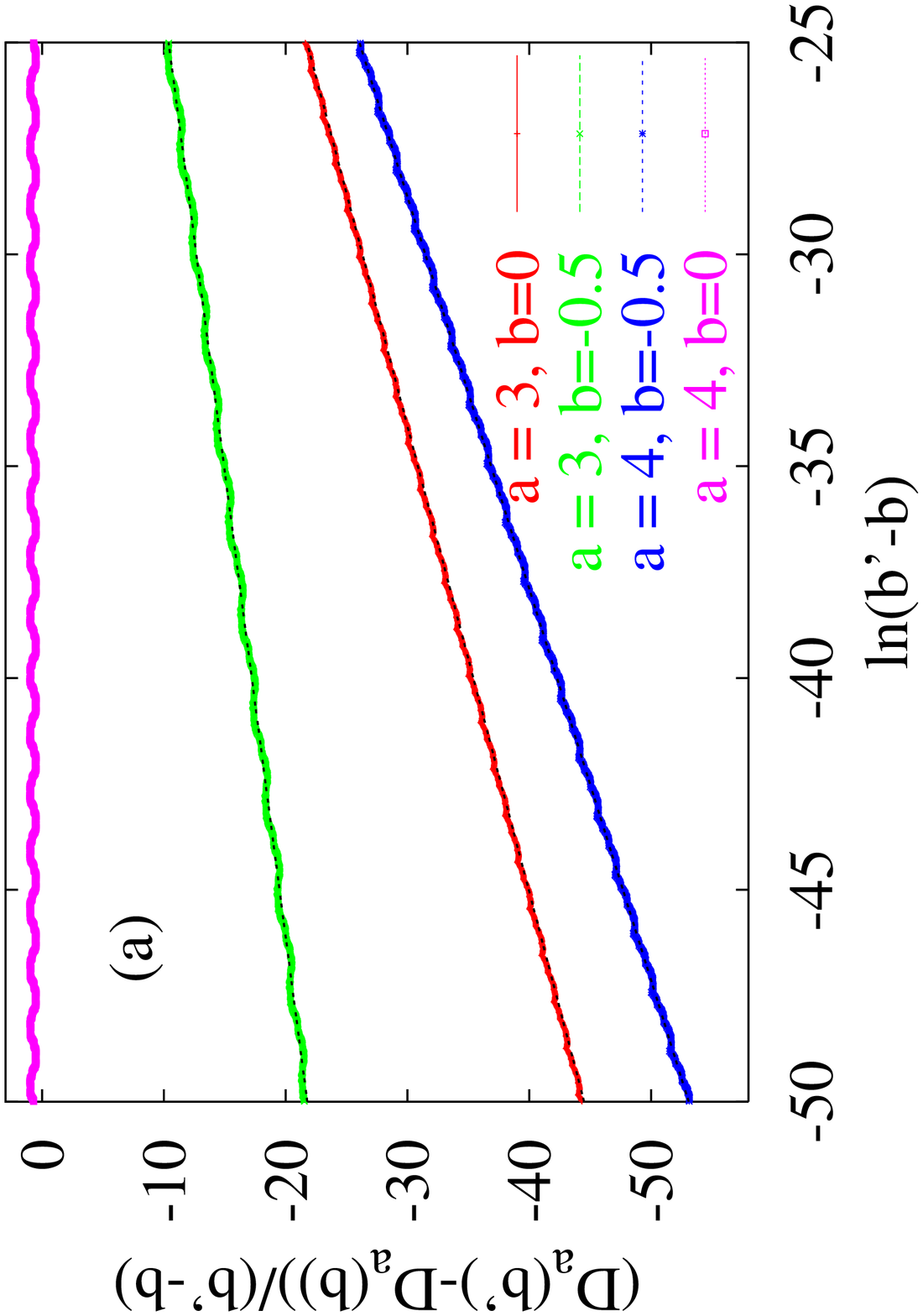}}
\subfigure{\hspace*{-0.5cm}\includegraphics[width=5.5cm,angle=-90]{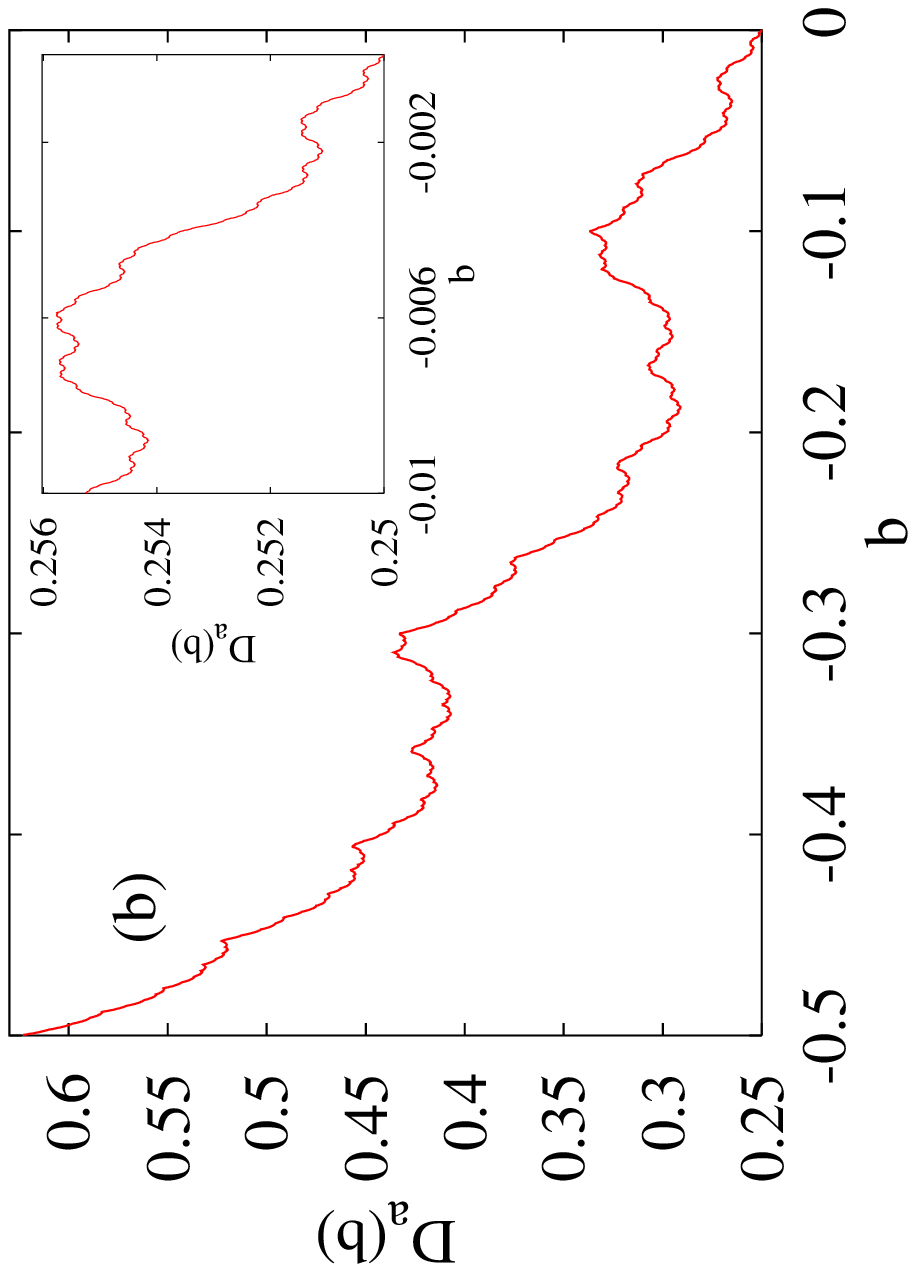}}
\caption{(a) Difference quotient $(D_a(b')-D(b))/(b'-b)$ as a function
of $\ln (b'-b)$ at integer values of $a \in \{3,4\}$ and with fixed $b
\in \{-0.5,0\}.$ Each curve is based on $10^5$ data points.  Included
are best fit curves (black dashed lines) whose fitted slopes (from
bottom to top: $3/(2\ln 4)$, $1/\ln 3$ and $1/(2\ln 3)$) agree with
the analytic predictions of Subsection~\ref{subsec:integer-slope} to
four significant figures.  The case $a=4,b=0$ clearly has slope zero,
as predicted. The barely visible fine scale oscillations of each curve
reflect higher order correlations in these quantities. (b) Diffusion
coefficient $D_a(b)$ at $a=4$ for $-0.5\le b\le0$ and a magnification
of the region around $b=0$. For each curve 2000 data points have been
computed from exact analytical solutions for $D_a(b)$
\cite{GrKl-02}. These curves form the basis for the two graphs at
$a=4$ displayed in (a).}  \label{fig:variation_in_b} \end{figure}

We have also numerically investigated the accuracy of
observation~(\ref{obs:random}) in
Subsection~\ref{subsec:integer-slope}.
Its main statement is that at fixed $\Delta b$ and with $b$ values taken uniformly
from the interval $[0,1/2),$ the quantity $(D(b')-D(b))/\Delta
b\sqrt{-\ln \Delta b}$ should be distributed like a mixture of
centered Gaussians, that distribution being independent of the particular value of
$\Delta b$. 
In fact what is typically seen at integer slopes is a distribution rather close to a pure Gaussian. We have tested this using the technique of quantile-quantile plotting (qqplots) as well the standard Shapiro-Wilk normality test. Both tools were implemented in the statistical package R \cite{R-07}.

Figure~\ref{fig:qqplot_a_4} presents results obtained for three sets
of data with the slope fixed at $a=4.$ For larger $a,$ the results
become closer to a fixed Gaussian, as the function $g(x)$ in
(\ref{eq:2-3-sum}) becomes more dominated by the $x$ term which has no
$b$ dependence. Here however, deviations from Gaussianity can be seen,
at least for sufficiently small $\Delta b.$ In the three parts of
Figure~\ref{fig:qqplot_a_4}, the red line with slope $\sigma$ and zero
offset $\mu$ shows the theoretical result for a Gaussian distribution
with standard deviation $\sigma$ and mean $\mu,$ with those parameters
here taken as those of our data set. As can be seen, all our
distributions show close agreement with this curve. However, the
Shapiro-Wilk normality test is more discerning: in (a) $\Delta
b=10^{-10}$ and we obtain a p-value of only $0.008,$ well below the
significance level for rejecting the null hypothesis of normality. In
(b) $\Delta b=10^{-50}$ and we get a p-value of $0.25,$ demonstrating
that this distribution is indeed very close to a pure Gaussian. It is
however likely that the deviations from Gaussianity in (a) are rather
due to deterministic effects arising from the relatively large value
of $\Delta b$ chosen and not from the nature of the true limiting
distribution being a mixture of Gaussians predicted by
observation~(\ref{obs:random}). It seems that the dominant behaviour
when the distribution seems to have converged is not detectably
different from a pure Gaussian.  We note that despite this, the two
distributions in (a) and (b) are similar and both have mean close to
zero, demonstrating that we have no disagreement with
observation~(\ref{obs:random}), merely that its details are too
sensitive to check numerically.

It is however possible to go further numerically, for example one can also study the nature of the distribution obtained when $b$ is taken
from a subinterval of $[0,1/2),$ which as can be seen in
Fig.~\ref{fig:qqplot_a_4} (c) leads in the case of integer $a$ to
distributions with rather more fine structure than the nice curves
seen in Figs.~\ref{fig:qqplot_a_4} (a) and (b). This is clear evidence of the deterministic nature of the underlying system in the form of strong correlations at fine scales. In this case the Shapiro-Wilk p-value is about $0.001$.

As far as observation~(\ref{obs:random}) is concerned, away from
integer values of $a$ quite different behaviour is seen thus
clarifying that this observation is rather to be considered
atypical.  Here the distribution of $D-$differences is centred
around zero still, but with a more sharply peaked and heavily tailed
distribution than a Gaussian. These deviations persist even very
close to the integer cases (e.g.\ at $a=3+10^{-50}$), though Gaussian behaviour does appear to be approached slowly in the limit
of integer values.

These numerical methods can also be used to investigate variation of
the continuity of the transport coefficients as $b$ is held fixed and
$a$ varies, as considered in \cite{Koza-04} and already looked at
using box counting in Fig.~\ref{fig:fdimd3}. Here the maximal exponent
of logarithmic correction, i.e.\ $D(a')-D(a) \sim
|a'-a|(\ln|a'-a|)^2,$ can be seen for odd $a,$ and though this might
appear to be in contradiction to the third part of
Fig.~\ref{fig:fdimd3} for $a=5,$ in fact arbitrarily close to $a=5$
the exponent tends locally to zero. Thus the box counting only sees
the ``typical'' local behaviour and the current method is more suited
for picking out atypical behaviour at specific points.


\begin{figure}[t]
\centerline{\subfigure[]{\includegraphics[width=4.5cm]{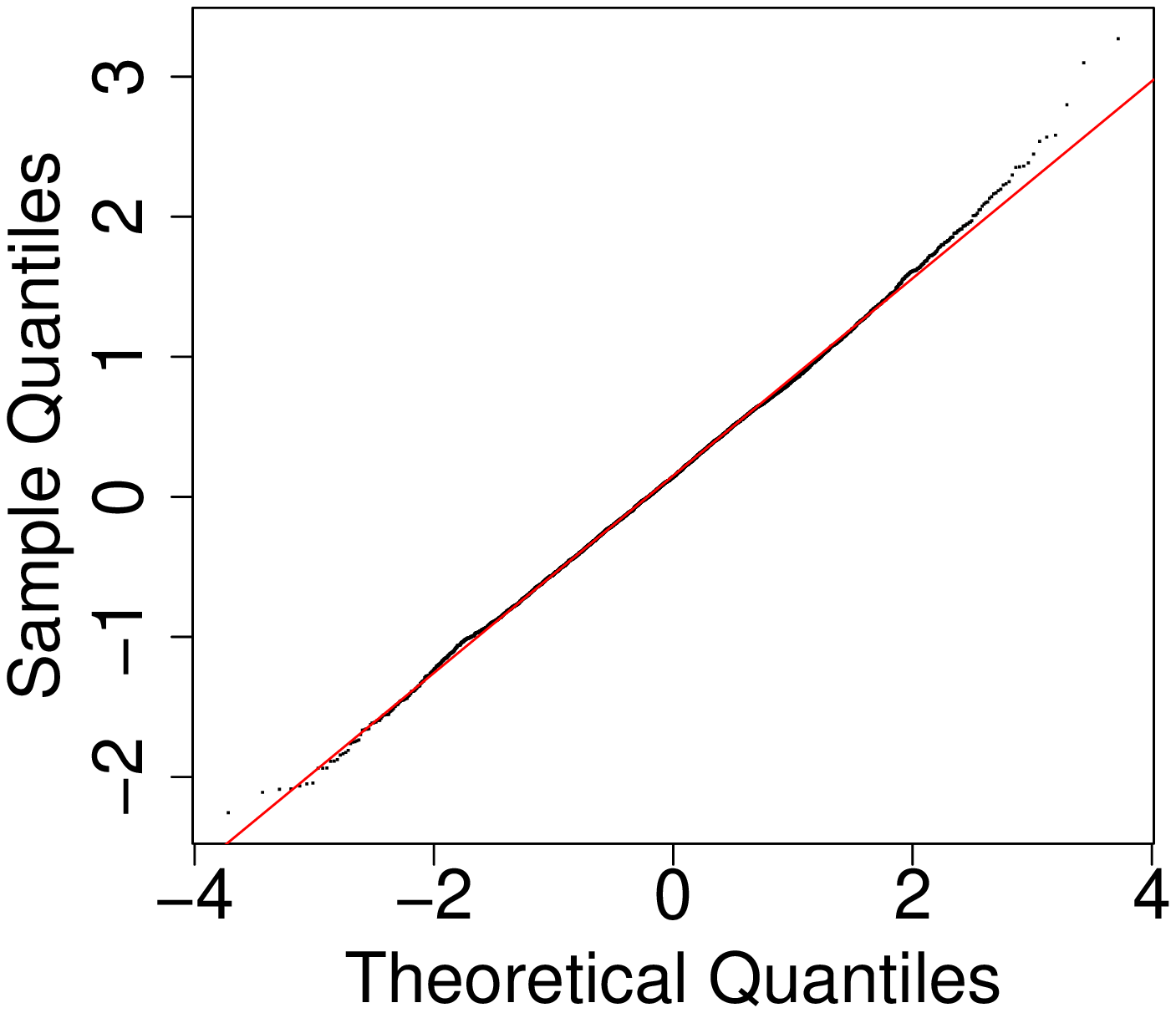}}
\subfigure[]{\includegraphics[width=4.5cm]{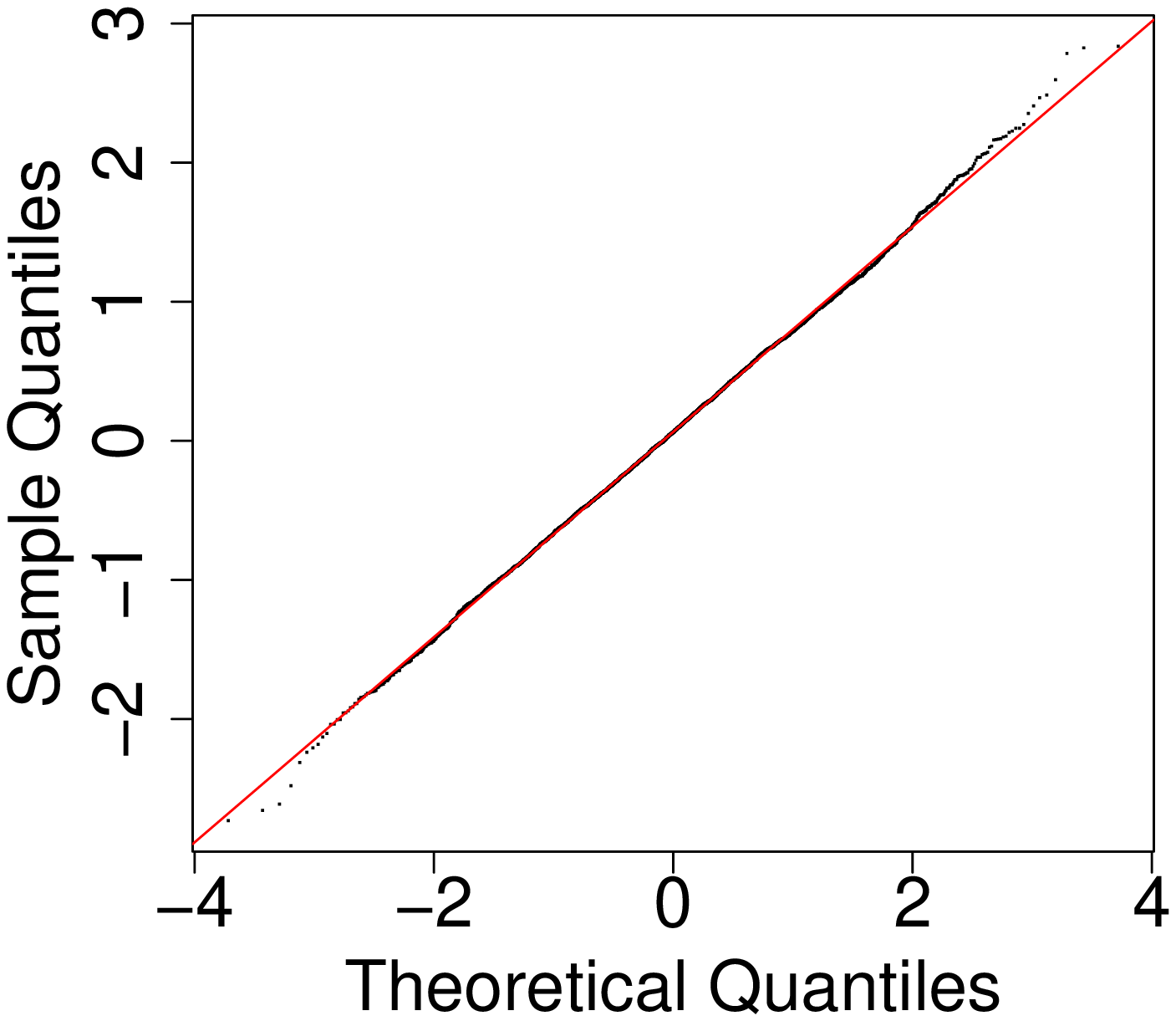}}
\subfigure[]{\includegraphics[width=4.5cm]{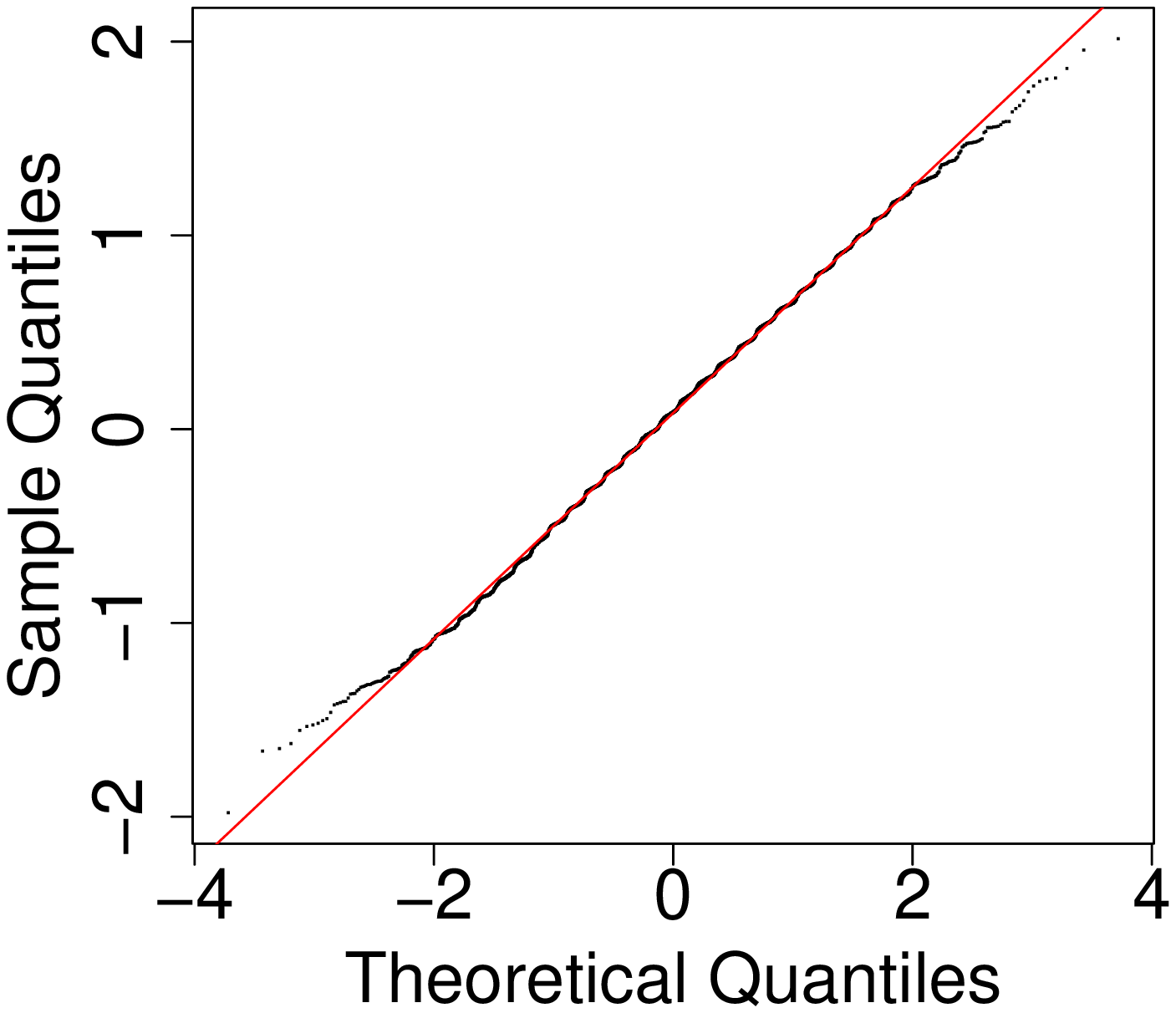}} }
\caption{(a) Normal quantile-quantile plot at $a=4$ for the
distribution of $D(b')-D(b),$ scaled by $\Delta b\sqrt{-\ln \Delta
b},$ with chosen $\Delta b=b'-b=10^{-10}$ held constant and $b$
picked from a uniform distribution on $[0,1/2).$ The red line with
slope $\sigma$ and zero offset $\mu$ would be the result for a
Gaussian distribution with standard deviation $\sigma$ and mean
$\mu.$ Here the numerically obtained values of these parameters were
used for the fit. (b) As (a) but with $\Delta b=10^{-50}$. (c) As
(b) but with the range of $b$ restricted to $[0,0.005).$ }
\label{fig:qqplot_a_4}
\end{figure}

\section{Conclusions and outlook}

\begin{enumerate}[(1)]
\item We proved rigorously that the diffusion coefficient of
  deterministic random walks generated by piecewise expanding interval
  maps depends continuously on the maps. More precisely, for
  ``natural'' parametrizations of the maps by some parameter
  $\lambda$, the diffusion coefficient as a function of the parameters
  has a modulus of continuity not worse than
  $|\delta\lambda|(\log|\delta\lambda|)^2$. Even if all maps in the family are
  topologically conjugate, the detailed analysis of
  section~\ref{subsec:integer-slope} shows that the modulus of continuity cannot be
  expected to be better than $|\delta\lambda\cdot\log|\delta\lambda||$.
This is in sharp
  contrast to the situation for the drift (or other averages of
  observables) that depend differentiably on parameters in this case
  \cite{BaSm-07}. One might thus conjecture:
If the maps are all topologically conjugate as in the case
of integer slopes, then J is Lipschitz and D has simple logarithmic
corrections. Otherwise J has simple logarithmic corrections and D has
quadratic ones.

\item We verified numerically the existence of logarithmic corrections
  in the box counting data for both the parameter dependent drift and
  diffusion coefficients. The computed values for the exponents of
  these logarithmic terms are compatible with the bounds predicted by
  our mathematical theory. However, we emphasize again the serious
  difficulties to obtain quantitatively reliable numerical results,
  which required to analyze huge data sets. These difficulties are due
  to strong local variations of these exponents and of the other
  control parameters governing the logarithmic corrections, as we find
  numerically.

  These new numerical results correct and amend the previous box
  counting analysis of Klages and Klau{\ss} \cite{KlKl-03} along the
  lines conjectured by Koza \cite{Koza-04}. Our model thus generates
  interesting examples of fractals for which the definition of the
  standard box counting dimension is misleading. We conclude that the
  (local) non-integer variations of the box dimension reported in
  \cite{KlKl-03} actually reflect non-trivial local variations of the
  parameters of the logarithmic corrections.

  We have furthermore numerically verified analytical predictions for
  the difference quotient of the diffusion coefficient as a function
  of the bias at integer slopes. These results suggest that the
  existence of logarithmic corrections is intimately related to the
  shape of the extrema in the diffusion coefficient curves.

\item In \cite{KoKl-07} a nonlinear generalization of our present
  model has been studied, which exhibits anomalous diffusion generated
  by marginal fixed points. Computer simulations led to conjecture
  that the anomalous diffusion coefficient of this map is
  discontinuous on a dense set of parameter values. It would be
  interesting to check this conjecture mathematically.

  These fractal transport coefficients also seem to provide a nice
  testing ground for methods of multifractal analysis \cite{FaWi-06}.

  Another important problem is to check whether such logarithmic
  corrections in transport coefficients might also be expected to
  occur in more `physical' systems, which are perhaps even accessible
  experimentally. This seems to be strongly related to the question
  whether a family of physical dynamical systems shares the same
  topological conjugacy class under parameter variation.

\end{enumerate}

\noindent {\bf Acknowledgements:}\\
G.K.\ and R.K.\ thank C.~Beck, C.~Dettmann and M.~Pollicott, the
organizers of the LMS Durham Symposium on {\em Dynamical Systems and
  Statistical Mechanics} in July 2006, where this work was started,
for their kind invitation.  R.K.\ and P.J.H.\ were supported by a
grant from the British EPSRC under EP/E00492X/1.


\begin{thebibliography}{99}
\bibitem{Bailey-95} D. Bailey, \emph{A Fortran-90 Based Multiprecision System,} ACM Transactions on Mathematical Software \textbf{21} (1995), 379-387
\bibitem{baladi-book} V. Baladi: \emph{Positive Transfer Operators
    and Decay of Correlations} (Advanced Series in Nonlinear Dynamics,
  Vol 16, World Scientific, Singapore, 2000).
\bibitem{baladi-07} V. Baladi,
\emph{On the susceptibility function of piecewise expanding interval maps,}
Commun. Math. Phys. \textbf{275} (2007), 	839-859.
\bibitem{BaSm-07} V. Baladi, D. Smania, \emph{Linear response formula for
    piecewise expanding unimodal maps,} Preprint arxiv.org (2007).
\bibitem{Barnett-75} V. Barnett \emph{Probability Plotting Methods and Order Statistics,} Appl. Stat. \textbf{24} (1975), 95-108
\bibitem{blank-93} M.L. Blank, \emph{Singular effects in chaotic dynamical
    systems,} Russian Acad. Sci. Dokl. Math. \textbf{47}, 1-5 (1993).
\bibitem{BlKe-97} M. Blank, G. Keller, \emph{Stochastic stability versus
    localization in chaotic dynamical systems,} Nonlinearity \textbf{10},
  81-107 (1997).
\bibitem{BuLi-07} O. Butterley, C. Liverani, \emph{Smooth Anosov flows:
    correlation spectra and stability,} J.  Modern Dynamics \textbf{1} (2007)
  301-322.
\bibitem{Crist-06} G. Cristadoro, \emph{Fractal diffusion coefficient
    from dynamical zeta functions}, J. Phys. A: Math. Gen. \textbf{39}
  (2006) L151-L157
\bibitem{Dolgopyat-04} D. Dolgopyat, \emph{On differentiability of SRB states
    for partially hyperbolic systems,} Invent.  Math. \textbf{155} (2004)
  389-449.
\bibitem{FaWi-06} A.Faccini, S.Wimberger, A.Tomadin,
  \emph{Multifractal fluctuations in the survival probability of an
    open quantum system,} Physica A {\bf 376} (2007) 266-274.
\bibitem{FlLa-97a} L. Flatto, J.C. Lagarias, \emph{The lap-counting
    function for linear mod one transformations II: the Markov chain
    for generalized lap numbers,} Ergod. Th.\& Dynam. Sys. \textbf{17}
  (1997), 123-146.
\bibitem{FlLa-97b} L. Flatto, J.C. Lagarias, \emph{The lap-counting
    function for linear mod one transformations III: the period of a Markov chain,} Ergod. Th.\& Dynam. Sys. \textbf{17}
  (1997), 369-403.
\bibitem{GaKl-98} P. Gaspard, R. Klages, \emph{Chaotic and fractal
    properties of deterministic diffusion-reaction processes}, Chaos
  \textbf{8} (1998) 409-423.
\bibitem{GrKl-02} J. Groeneveld, R. Klages, \emph{Negative and nonlinear
    response in an exacly solved dynamical model of particle transport,} J.
  Stat. Phs.  \textbf{109} (2002), 821-861.
\bibitem{hofbauer-80} F. Hofbauer, \emph{Maximal measures for simple
    piecewise monotonic transformations,}
  Z. Wahrscheinlichkeitstheorie verw. Geb. (now:
  Probab. Th. Rel. Fields) \textbf{52} (1980), 289-300.
\bibitem{hofbauer-81} F. Hofbauer, \emph{The maximal measure for linear mod
    one transformations,} J. London Math. Soc. (2) \textbf{23} (1981), 92-112.
\bibitem{HK-82} F. Hofbauer, G. Keller, \emph{Ergodic properties of invariant
    measures for piecewise monotonic transformations,} Math. Zeitschrift
  \textbf{180} (1982), 119-140.
\bibitem{JiLl-06} M. Jiang R. de la Llave, \emph{Linear response function for
    coupled hyperbolic attractors,} Commun. Math. Phys. \textbf{261} (2006)
  379-404.
\bibitem{JiRu-05} Y. Jiang, D. Ruelle, \emph{Analyticity of the susceptibility
    function for unimodal markovian maps of the interval,} Nonlinearity
  \textbf{18} (2005) 2447-2453.
\bibitem{keller-80} G. Keller, \emph{Un th\'eor\`eme de la limite centrale
    pour une classe de transformations monotones par morceaux,} C. R. Acad.
  Sci. Paris, S\'erie A, \textbf{291} (1980), 155-158
\bibitem{keller-82} G. Keller, \emph{Stochastic stability in some chaotic
    dynamical systems,} Monatshefte Math. \textbf{94} (1982), 313-333.
\bibitem{keller-99} G. Keller, \emph{Interval maps with strictly contracting
    Perron-Frobenius operators,} Int. J. Bifurc. Chaos \textbf{9} (1999),
  1777-1784.
\bibitem{KL-99} G. Keller, C. Liverani, \emph{Stability of the spectrum for
    transfer operators,} Ann. Mat. Sc. Norm. Pisa \textbf{28} (1999), 141-152.
\bibitem{KL-05} G. Keller, C. Liverani, \emph{A spectral gap for a
    one-dimensional lattice of coupled piecewise expanding interval
    maps,} in: Dynamics of Coupled Map Lattices and of Related
  Spatially Extended Systems (Eds.: J.-R. Chazottes, B. Fernandez),
  Lecture Notes in Physics \textbf{671} (2005), pp. 115-151, Springer
  Verlag.
\bibitem{KlDo-95} R. Klages, J.R. Dorfman, \emph{Simple maps with
    fractal diffusion coefficients,} Phys. Rev. Lett.  \textbf{74}
  (1995) 387-390.
\bibitem{Klages-96} R. Klages, \emph{Deterministic diffusion in
    one-dimensional chaotic dynamical systems} (Wissenschaft \&
  Technik-Verlag, Berlin, 1996).
\bibitem{KlDo-97} R. Klages, J.R. Dorfman, \emph{Dynamical crossover in
    deterministic diffusion,} Phys. Rev. E \textbf{55} (1997)
  R1247-R1250.
\bibitem{KlDo-99} R. Klages, J.R. Dorfman, \emph{Simple deterministic
    dynamical systems with fractal diffusion coefficients,} Phys. Rev.
  E \textbf{59} (1999) 5361-5383.
\bibitem{KlKl-03} R. Klages, T. Klau\ss, \emph{Fractal fractal
    dimensions of deterministic transport coefficients,} J. Phys. A:
  Math. Gen. \textbf{36} (2003) 5747-5764.
\bibitem{KlBa-04} R. Klages, I.F. Barna, L. M{\'a}ty{\'a}s,
\emph{Spiral modes in the diffusion of a single granular particle on a
  vibrating surface,} Phys. Lett. A \textbf{333} (2004) 79-84.
\bibitem{Klages-07} R. Klages, \emph{Microscopic chaos, fractals and
    transport in nonequilibrium statistical mechanics} (Advanced
  Series in Nonlinear Dynamics, Vol 24, World Scientific, Singapore,
  2007).
\bibitem{KoKl-07} N. Korabel, R. Klages, A.V. Chechkin, I.M. Sokolov,
V.Yu. Gonchar, \emph{Fractal properties of anomalous diffusion in
intermittent maps,} Phys. Rev. E \textbf{75} (2007) 036213.
\bibitem{Klages-07b} R.Klages, unpublished.
\bibitem{Koza-04} Z. Koza, \emph{Fractal dimension of transport coefficients
    in a deterministic dynamical system,} J. Phys. A: Math. Gen. \textbf{37}
    (2004) 10859-10877.
\bibitem{LY-73} A. Lasota, J.A. Yorke, \emph{On the existence of invariant
    measures for piecewise monotonic transformations,} Transactions Amer.
  Math. Soc. \textbf{186} (1973), 481-488.
\bibitem{mazzolena-07} M. Mazzolena, \emph{Dinamiche espansive
    unidimensionali: dipendenza della misura invariante da un parametro,}
  Master's Thesis, Roma 2 (2007).
\bibitem{RE-83} J. Rousseau-Egele, \emph{Un th\'eor\`eme de la limite locale
    pour une classe de transformations dilatantes et monotones par morceaux,}
  Ann. Probab. \textbf{11}, 772-788 (1983).
\bibitem{Ruelle-97} D. Ruelle, \emph{Differentiation of SRB states,}
  Commun. Math. Phys. \textbf{187} (1997), 227-241. (See also
  Commun. Math. Phys. \textbf{234} (2003), 185-190.)
\bibitem{Ruelle-05} D. Ruelle, \emph{Differentiating the a.c.i.m. of an
    interval map with respect to $f$,} Commun. Math.  Phys. \textbf{258}
  (2005) 445-453.
\bibitem{Tr-95} C. Tricot, \emph{Curves and fractal dimension}
  (Springer, Berlin, 1995)
\bibitem{R-07} R Development Core Team, \emph{R: A Language and Environment for Statistical Computing} (R Foundation for Statistical Computing, Vienna, Austria, 2007)
\end{thebibliography}
\end{document}